\DeclareMathOperator{\sgn}{sgn}
\def\BBox{\rule{2mm}{3mm}}
\def\QED{\hfill$\BBox$}
\newenvironment{proof}
{\begin{rm}\par\smallskip\noindent{\bf Proof.}\quad}{\QED\end{rm}}
\newtheorem{thm}{Theorem}[section]
\newtheorem{prop}[thm]{\bfseries Proposition} %%
\newtheorem{defn}[thm]{\bfseries Definition}
\begin{document}

%\begin{center}

%%%%%%%%%%%%%%%%%%%%%%%%%%%%%%%%%%%%%%%%%%%%%%%%%%%%%%%%
% Title
%%%%%%%%%%%%%%%%%%%%%%%%%%%%%%%%%%%%%%%%%%%%%%%%%%%%%%%%
\title{Enumeration of PLCP-orientations of the $4$-cube} 

%%%%%%%%%%%%%%%%%%%%%%%%%%%%%%%%%%%%%%%%%%%%%%%%%%%%%%%%
% begin : Authors
%%%%%%%%%%%%%%%%%%%%%%%%%%%%%%%%%%%%%%%%%%%%%%%%%%%%%%%%

\author{
%^Komei Fukuda
%\\
%Department of Mathematics and \\
%Institute of Theoretical Computer Science,\\
%ETH Zurich, Switzerland\\
%{\tt fukuda@math.ethz.ch}
%\and
Lorenz Klaus
\\
National Institute of Informatics, Japan\\
and
JST, ERATO, Kawarabayashi \\
Large Graph Project, Japan\\
{\tt lorenz@nii.ac.jp}
\and
Hiroyuki Miyata
\\
Graduate School of Information Sciences, \\
Tohoku University, Japan\\
{\tt hmiyata@dais.is.tohoku.ac.jp}
}

\maketitle

\begin{abstract}
The linear complementarity problem (LCP) provides a unified approach to many problems such as
linear programs, convex quadratic programs, and bimatrix games. The general LCP is known to be NP-hard, but there are some promising results that suggest the possibility that
the LCP with a P-matrix (PLCP) may be polynomial-time solvable. 
However, no polynomial-time algorithm for the PLCP has been found yet and the computational complexity of the PLCP remains open. 
Simple principal pivoting (SPP) algorithms, also known as Bard-type algorithms, are candidates for polynomial-time algorithms for the PLCP.
In 1978, Stickney and Watson interpreted SPP algorithms as a family of algorithms that seek the sink of unique-sink orientations of $n$-cubes. They performed the enumeration of the arising orientations of the $3$-cube, hereafter called PLCP-orientations. In this paper, we present the enumeration of PLCP-orientations of the $4$-cube.
The enumeration is done via construction of oriented matroids generalizing P-matrices and realizability classification of oriented matroids.
Some insights obtained in the computational experiments are presented as well.
\end{abstract}

\section{Introduction}
The {\it linear complementarity problem (LCP)}, introduced by Cottle and Dantzig~\cite{CD68} and Lemke and Howson~\cite{LH64}, is defined as follows.
\begin{align*}
\textbf{LCP($M,{\bm q}$):}&\\
\text{find } &{\bm w},{\bm z} \in \mathbb{R}^n \text{ s.t. }
\\
&{\bm w}-M{\bm z} = {\bm q},\\
&{\bm w},{\bm z} \geq 0, {\bm w}^T{\bm z} = 0
\end{align*}
for given matrix $M \in \mathbb{R}^{n \times n}$ and vector ${\bm q} \in \mathbb{R}^n$.
%It can be rewritten as the following equivalent problem:
%\begin{align*}
%\text{Find } &{\bm x} = (x_1,\dots,x_{2n+1})^T \in \mathbb{R}^{2n+1} \\
%\text{s.t. } &
%\begin{cases}
%(I_n,-M,-{\bm q}){\bm x} = 0,\\
%{\bm x} \geq 0,  x_{2n+1} > 0, x_ix_{i+n} = 0 \text{ for i=1,\dots,n},
%\end{cases}
%\end{align*}
%where $I_n$ is the $n \times n$ identity matrix.
The LCP has been studied extensively because it provides a unified approach to several important optimization problems
such as linear programs, convex quadratic programs, and bimatrix games. 
The LCP is in general NP-hard~\cite{C89}, but some interesting subclasses can be solved in polynomial time.
For positive semidefinite matrices $M$, for instance, any LCP$(M,q)$ is polynomial-time solvable by
interior point methods~\cite{KMNY90,KMY89}.
The LCP with a {\it K-matrix} of order $n$ can be solved in a linear number in $n$ of pivot steps by \emph{simple principal pivoting algorithms} 
with any pivot rules~\cite{FFGL09}.
LCPs with {\it hidden K-matrices}, which build a superclass of K-matrices, can be solved in polynomial time through solving
linear programs~\cite{M76,PC85}.
Solving LCPs with hidden K-matrices of order $n$ is actually as hard as solving linear programs over combinatorial $n$-cubes
when simple principal pivoting methods are employed~\cite{FFK_pre}.
Among interesting problem classes, the LCP with a {\it P-matrix} (PLCP) has been paid special attention 
since it is a relatively broad class (it includes all the above classes),
admits several natural characterizations, and there are some promising results on its complexity.

\begin{defn}
\label{defn:p_matrix}
A {\it P-matrix} is a matrix $M \in \mathbb{R}^{n \times n}$ whose principal minors are all positive.
\end{defn}  
P-matrices admit several characterizations. 
In the following, let $[n]$ for $n \in \mathbb{N}$ denote the set $\{ 1,\dots,n\}$.
Let $A:=(I_n,-M)$ and 
$A_B$ for $B \subseteq [n]$ be the square submatrix of $A$ whose $i$th column is the $i$th column of $-M$ if $i \in B$,
and the $i$th column of the identity matrix $I_n$ otherwise.
Let $\oplus$ denote the \emph{symmetric difference} operator of sets.
\begin{thm}%{\rm (\cite{I66,M72,STW57})}\\
\label{thm:P_matrix}
A matrix $M \in \mathbb{R}^{n \times n}$ is a P-matrix if and only if one of the following equivalent conditions holds.
\begin{itemize}
\item[{\rm (A)}] $LCP(M,{\bm q})$ has a unique solution for every ${\bm q} \in \mathbb{R}^n$.
\item[{\rm (B)}]  For any $B \subseteq [n]$ and $i \in [n]$, we have 
     $\det(A_B)\det(A_{B \oplus \{ i\}}) < 0$.
\item[{\rm (C)}] Every nonzero vector ${\bm x} \in \mathbb{R}^n$ satisfies $x_i(M{\bm x})_i > 0$ for some $i \in [n]$.
\end{itemize}          
\end{thm}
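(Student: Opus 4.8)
\quad The plan is to establish the three equivalences $\text{P-matrix}\Leftrightarrow(\mathrm{A})$, $\text{P-matrix}\Leftrightarrow(\mathrm{B})$, and $\text{P-matrix}\Leftrightarrow(\mathrm{C})$ separately (each is classical), starting with (B), since it rests on one determinant identity that also drives (A). Permuting the rows and the columns of $A_B$ by the same permutation so that the indices in $[n]\setminus B$ come last makes $A_B$ block lower-triangular with diagonal blocks $(-M)_{BB}$ and an identity matrix, so $\det A_B=(-1)^{|B|}\det M_{BB}$; in particular $\sgn\det A_B=(-1)^{|B|}$ when $M$ is a P-matrix. Since $|B\oplus\{i\}|=|B|\pm 1$, the exponent $|B|+|B\oplus\{i\}|$ is odd, hence $\det A_B\,\det A_{B\oplus\{i\}}=-\det M_{BB}\,\det M_{B\oplus\{i\},\,B\oplus\{i\}}$, so (B) says exactly that $\det M_{BB}$ and $\det M_{B\oplus\{i\},\,B\oplus\{i\}}$ are nonzero and of equal sign for all $B$ and $i$. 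As $\det M_{\emptyset\emptyset}=1>0$ and any $B\subseteq[n]$ can be reached from $\emptyset$ by adding one element at a time, this is in turn equivalent to all principal minors of $M$ being positive, and the reverse implication is trivial.

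For $\text{P-matrix}\Leftrightarrow(\mathrm{C})$, I would first record the lemma that $M+D$ is a P-matrix whenever $M$ is a P-matrix and $D$ is a nonnegative diagonal matrix: expanding $\det\bigl((M+D)_{SS}\bigr)$ multilinearly in its rows gives $\det\bigl((M+D)_{SS}\bigr)=\sum_{T\subseteq S}\bigl(\prod_{i\in T}D_{ii}\bigr)\det M_{S\setminus T,\,S\setminus T}$, a sum of nonnegative terms whose $T=\emptyset$ summand is $\det M_{SS}>0$. Granting this, suppose $M$ is a P-matrix but some $x\neq 0$ has $x_i(Mx)_i\le 0$ for all $i$; restricting to $S=\{\,i:x_i\neq 0\,\}$ and conjugating $M_{SS}$ by the diagonal sign matrix of $x|_S$ reduces the situation to a P-matrix $N$ with $u>0$ and $Nu\le 0$, whence $Nu=-Du$ for some nonnegative diagonal $D$ and $(N+D)u=0$ with $u\neq 0$, contradicting that $N+D$ is a nonsingular P-matrix. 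Conversely, if $M$ is not a P-matrix, let $B$ be a smallest subset with $\det M_{BB}\le 0$, so every proper principal submatrix of $M_{BB}$ has positive determinant; if $\det M_{BB}=0$ take $v$ a nonzero kernel vector of $M_{BB}$, and if $\det M_{BB}<0$ take $v=-M_{BB}^{-1}e_j$, whose $j$-th coordinate $-(M_{BB}^{-1})_{jj}$ is positive because $(M_{BB}^{-1})_{jj}$ is a positive principal minor of $M_{BB}$ over the negative number $\det M_{BB}$. Extending $v$ by zeros gives in both cases a nonzero $x$ with $x_i(Mx)_i\le 0$ for all $i$, so (C) fails.

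For $\text{P-matrix}\Leftrightarrow(\mathrm{A})$, I would pass to the normal map $\phi\colon\mathbb{R}^n\to\mathbb{R}^n$, $\phi(x)=Mx^{+}-x^{-}$ with $x^{\pm}=\max(\pm x,0)$ componentwise; then $x\mapsto(w,z)=(x^{-},x^{+})$ is a bijection from $\phi^{-1}(-{\bm q})$ onto the solution set of $LCP(M,{\bm q})$. On the orthant $O_B=\{\,x:x_i\ge 0\ (i\in B),\ x_i\le 0\ (i\notin B)\,\}$, $\phi$ is linear with matrix whose $i$-th column is $M_{\cdot i}$ for $i\in B$ and $e_i$ otherwise; the same block-triangularization as for (B) gives this matrix determinant $\det M_{BB}$ (now with no sign, the $B$-columns carrying $+M$). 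Hence ``$M$ is a P-matrix'' is precisely the statement that every linear branch of $\phi$ has positive Jacobian determinant, i.e.\ that $\phi$ is coherently oriented, and the classical theorem that a coherently oriented piecewise-linear map of $\mathbb{R}^n$ subordinate to a polyhedral fan is a homeomorphism onto $\mathbb{R}^n$ then makes $\phi^{-1}(-{\bm q})$, hence the solution set of $LCP(M,{\bm q})$, a singleton for every ${\bm q}$. (Uniqueness alone is cheaper: for two solutions the difference $\delta$ of the $z$-parts satisfies $\delta_i(M\delta)_i\le 0$ for all $i$ by complementarity, so $\delta=0$ by (C).) Conversely, if $M$ is not a P-matrix then (B) fails, and the determinant identity shows the branches of $\phi$ cannot tile $\mathbb{R}^n$ --- two adjacent branches have oppositely signed Jacobians, so their images overlap on one side of their common hyperplane and leave a gap on the other (or some branch is singular) --- so for suitable ${\bm q}$ the problem $LCP(M,{\bm q})$ has two solutions or none. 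I expect the homeomorphism theorem for coherently oriented piecewise-linear maps --- equivalently the Samelson--Thrall--Wesler covering theorem for complementary cones, equivalently the finiteness of Lemke's algorithm on a P-matrix --- to be the single substantive ingredient; the rest is bookkeeping around the identity $\det A_B=(-1)^{|B|}\det M_{BB}$ and the connectivity of the Boolean lattice under single-element flips.
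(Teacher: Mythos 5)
The paper never actually proves Theorem \ref{thm:P_matrix}: it treats (B) as a mere restatement of Definition \ref{defn:p_matrix} in terms of $(I_n,-M)$, and cites Samelson--Thrall--Wesler, Ingleton and Murty for (A) and Fiedler--Pt\'ak for (C). So you are supplying more than the paper does. Your (B) argument is correct and is exactly the content behind the paper's ``restatement'' remark: the identity $\det A_B=(-1)^{|B|}\det M_{BB}$ (block triangularization is fine), the parity of $|B|+|B\oplus\{i\}|$, and connectivity of the Boolean lattice under single flips. Your P $\Leftrightarrow$ (C) proof is also correct and self-contained (the $M+D$ lemma via the multilinear expansion, the sign-conjugation reduction to $u>0$, $Nu\le 0$, and the minimal violated minor with $v=-M_{BB}^{-1}e_j$ for the converse); this is the classical Fiedler--Pt\'ak argument. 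The uniqueness half of (A), via (C) applied to the difference of the $z$-parts of two solutions, is likewise fine.

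Two concrete weak points, both in (A). First, the ``classical theorem'' you invoke for existence --- that a coherently oriented piecewise-linear map subordinate to a polyhedral fan is a homeomorphism of $\mathbb{R}^n$ --- is false in that generality: in the plane one can map eight $45^\circ$ cones linearly, each orientation-preservingly, onto eight $90^\circ$ cones, getting a continuous PL map that winds twice around the origin and is $2$-to-$1$. The true statement needs the normal-map/complementary-cone structure (Samelson--Thrall--Wesler, or Robinson's theorem on affine normal maps), so as written your existence step is really a citation of the same classical ingredient the paper cites, dressed as a more general principle that does not hold. You can close this gap with tools you already have: $\phi$ is positively homogeneous, and $\phi(y)=0$ with $y\neq 0$ forces $y^{+}\neq 0$ and $y^{+}_i(My^{+})_i=y^{+}_iy^{-}_i=0$ for all $i$, contradicting (C); hence $\phi$ is proper, and properness plus your injectivity plus invariance of domain makes the image of $\phi$ nonempty, open and closed, i.e.\ all of $\mathbb{R}^n$. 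Second, in the direction (A) $\Rightarrow$ P your overlap argument is sound when two adjacent principal minors have strictly opposite signs, but the vanishing-minor case is only asserted (``or some branch is singular''). It does need an argument; for instance, if $\det M_{SS}=0$, a kernel vector $v$ of $M_{SS}$ gives the linear relation $\sum_{i\in S}v_i(-M_{\cdot i})+\sum_{j\notin S}(M_{S^cS}v)_j e_j=0$ among the generators of the complementary cone indexed by $S$, so any $q$ expressed with strictly positive coefficients in these generators admits a one-parameter family of distinct solutions, and uniqueness fails. With those two repairs your proposal is a complete and essentially standard proof, which is more than the paper attempts.
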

Equivalence of (A) and the P-matrix property has been obtained by Samelson, Thrall and Wesler~\cite{STW57}, Ingleton~\cite{I66}, and Murty~\cite{M72} independently.
Characterization (B) is a restatement of Definition \ref{defn:p_matrix} in terms of the matrix $(I_n,-M)$.
Characterization (C) is due to Fiedler and Pt\'ak~\cite{FP62}.
There are many other characterizations of P-matrices. The reader can find good surveys in \cite{FFK11,T84}.
The following result by Megiddo has encouraged researchers to seek a polynomial-time algorithm for the PLCP.
\begin{thm}{\rm (\cite{Me88})}
If the PLCP is NP-hard, then co-NP = NP.
\end{thm}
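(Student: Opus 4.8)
The plan is to exhibit an appropriate decision version of the PLCP that lies in NP $\cap$ co-NP and then invoke the folklore fact that an NP-hard problem lying in co-NP forces NP $=$ co-NP. Concretely, I would work with rational data and the following promise problem: given a P-matrix $M \in \mathbb{Q}^{n\times n}$, a vector $\bm{q}\in\mathbb{Q}^n$, and an index $i \in [n]$, decide whether $z_i > 0$ holds in the solution of $LCP(M,\bm{q})$. By characterization (A) of Theorem~\ref{thm:P_matrix} this solution exists and is unique, so the question is well posed.

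First I would argue that both ``yes'' and ``no'' instances admit succinct certificates, namely the unique solution $(\bm{w},\bm{z})$ itself. This pair is the complementary basic solution associated with some $B \subseteq [n]$ for which $A_B$ is nonsingular (indeed, by characterization (B) the determinants $\det(A_B)$ are all nonzero), and it is recovered from the linear system $A_B \bm{y} = \bm{q}$; hence $(\bm{w},\bm{z})$ is rational with bit-size polynomially bounded in the input size. A verifier given a candidate pair checks in polynomial time that $\bm{w} - M\bm{z} = \bm{q}$, $\bm{w},\bm{z} \geq \bm{0}$, and $\bm{w}^T\bm{z} = 0$, and then inspects the sign of $z_i$. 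By uniqueness of the solution, a valid pair with $z_i > 0$ certifies a ``yes'' instance and a valid pair with $z_i = 0$ certifies a ``no'' instance, so the problem belongs to NP $\cap$ co-NP (as a promise problem).

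It then remains to close the argument. If the PLCP is NP-hard, then in particular SAT reduces to the decision problem above in polynomial time; since a reduction to a promise problem may output only instances satisfying the promise, the matrices produced are genuine P-matrices. As the problem lies in co-NP and co-NP is closed under polynomial-time many-one reductions, we obtain SAT $\in$ co-NP. Because SAT is NP-complete, this gives NP $\subseteq$ co-NP and therefore NP $=$ co-NP.

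The point that requires the most care — and what I expect to be the main conceptual obstacle — is that deciding whether a given matrix is a P-matrix is itself co-NP-complete, so a polynomial-time verifier cannot be asked to check the promise; the argument works precisely because the hardness reduction is by definition guaranteed to emit only valid P-matrix instances, so it suffices to establish membership in NP $\cap$ co-NP as a statement about the promise problem. The remaining ingredient, the polynomial bound on the bit-size of the unique solution, is routine once one observes that the solution is determined by a single nonsingular submatrix $A_B$ of $(I_n,-M)$.
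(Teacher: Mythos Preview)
The paper does not actually prove this theorem; it merely cites Megiddo's note~\cite{Me88} and moves on. Your proposal is a correct reconstruction of the standard argument, and it is essentially Megiddo's: the unique solution of a PLCP instance has polynomial bit-size (it is a basic solution determined by some $A_B$), so it certifies both the ``yes'' and the ``no'' answer of a natural decision version, placing that version in $\mathrm{NP}\cap\mathrm{co\text{-}NP}$ as a promise problem; NP-hardness then forces $\mathrm{NP}=\mathrm{co\text{-}NP}$.

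Two small points worth tightening. First, the hypothesis ``the PLCP is NP-hard'' refers to the search problem, whereas your membership argument is for the decision problem ``is $z_i>0$?''. You should make explicit that the search problem Turing-reduces to this decision problem (query all $n$ indices to recover the support $B$, then solve $A_B\bm{y}=\bm{q}$), so NP-hardness transfers. Second, since the resulting reduction is a Turing reduction, the closure step is not ``co-NP is closed under many-one reductions'' but rather $\mathrm{P}^{\mathrm{NP}\cap\mathrm{co\text{-}NP}}\subseteq\mathrm{NP}\cap\mathrm{co\text{-}NP}$, obtained by nondeterministically guessing and verifying the appropriate certificate for each oracle call. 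Both are routine, and your treatment of the promise issue (the verifier need not check that $M$ is a P-matrix because the reduction is obliged to output only valid instances) is exactly right.
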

However, no polynomial-time algorithm for the PLCP is currently known and the computational complexity of the PLCP remains open.

\subsection*{Simple principal pivoting (SPP) algorithms}
Regarding pivoting approaches to the LCP, {\it simple principal pivoting (SPP) algorithms}~\cite{Z60,B74}, {\it Lemke's algorithm}~\cite{L65}, and 
the {\it criss-cross method}~\cite{KT89,FT92} have been proposed as solving methods.
When focusing on the PLCP, Lemke's algorithm and the criss-cross method can also be viewed as SPP algorithms\footnote{This follows from the fact that a simple principal pivot is always possible
for the PLCP at any nonterminal basis, and Lemke's algorithm interpreted
as a parametric LCP algorithm, see \cite[Section 4.5.2]{CPS92}, uses a simple principal pivot whenever possible.}.
SPP algorithms were first introduced by  Zoutendijk~\cite{Z60} and Bard~\cite{B74}, and are also called {\it Bard-type algorithms}.
In the following, we shall explain SPP algorithms.

For a PLCP($M,{\bm q}$), let $A_B$ be as defined above. We denote by $A_B[i,{\bm q}]$ for $i \in [n]$ the $n \times n$ matrix that coincides with $A_B$ except that the $i$th column is replaced by ${\bm q}$. First, we pick any $B \subseteq [n]$. Since $A_B$ is nonsingular, we call $B$ a {\it basis}. Let
\[ w_i := 
\begin{cases}
0 & \text{if $i \in B$,}\\
(A_B^{-1}{\bm q})_i & \text{if $i \notin B$}
\end{cases}
\
\
\text{ and }
\
\
 z_i :=
\begin{cases}
(A_B^{-1}{\bm q})_i & \text{if $i \in B$,}\\
0 & \text{if $i \notin B$}
\end{cases}
\]
for all $i \in [n]$.
If $A_B^{-1}{\bm q} \geq 0$, then ${\bm z}:=(z_1,\dots,z_n)^T$ and ${\bm w}:=(w_1,\dots,w_n)^T$ is the unique solution.
If $A_B^{-1}{\bm q} \not\geq 0$, then ${\bm w}-M{\bm z} = {\bm q}$ and ${\bm w}^T{\bm z}=0$ are satisfied, but
${\bm w},{\bm z} \geq 0$ fails.
To improve the situation, we choose an index $i \in [n]$ with $(A_B^{-1}{\bm q})_i < 0$.
We have $(A_B^{-1}{\bm q})_i = \det(A_B)^{-1}\det(A_B[i,{\bm q}])$ by Cramer's rule.
Since $M$ is a P-matrix, it holds that
\begin{align*}
(A_B^{-1}{\bm q})_i (A_{B \oplus \{ i \}}^{-1}{\bm q})_i & = \det(A_B)^{-1}\det(A_B[i,{\bm q}])\det(A_{B \oplus \{ i \}})^{-1}\det(A_{B \oplus \{ i \}}[i,{\bm q}]) \\
& = \det(A_B)^{-1}\det(A_{B \oplus \{ i \}})^{-1}\det(A_B[i,{\bm q}])^{2} < 0
\end{align*}
by Characterization (B) in Theorem \ref{thm:P_matrix}.
Now, we set $B:= B \oplus \{ i \}$ and obtain $(A_B^{-1}{\bm q})_i > 0$. The signs at other indices can change arbitrarily.
Such a basis exchange operation is a {\it pivot}.

SPP algorithms keep pivoting until a solution is found. We obtain different algorithms by the choice of a {\it pivot rule}, which uniquely selects an index $i$ with $(A_B^{-1}{\bm q})_i < 0$ in each iteration. Some rules can cause cycles, and thus a pivot rule has to be chosen carefully so that the algorithm will terminate in a finite number of iterations. The first finite pivot rule has been proposed by Murty~\cite{M74}.
Till this day, neither a deterministic nor randomized (simple) pivot rule is known that is guaranteed to terminate in a polynomial number in $n$ of pivot steps. Such a rule would yield a strongly polynomial-time solving method for the PLCP.

\subsection*{PLCP-orientations}
Stickney and Watson~\cite{SW78} introduced a digraph model for SPP algorithms.

Consider a PLCP$(M,{\bm q})$ of order $n$. We assume \emph{nondegeneracy}. The instance is \emph{nondegenerate} if $\det(A_B[i,{\bm q}]) \neq 0$ for every $B \subseteq [n]$ and $i \in [n]$. Let the \emph{$n$-cube} graph be the graph with vertex set $\{ B \, | \, B \subseteq [n]\}$ and edge set 
$$\{ \{B,C\} \, | \, B,C \subseteq [n] \text{ and } |B \oplus C|=1 \}.$$
In other words, two bases are adjacent if and only if they differ in exactly one entry. We assign an orientation to each edge. The edge connecting $B \subseteq [n]$ with an adjacent $B \oplus \{i\}$ for $i \in [n]$ is oriented from $B$ towards $B \oplus \{i\}$ if $(A_{B}^{-1}{\bm q})_i < 0$, or equivalently if
\begin{align} 
\label{PLCP}
%\det({\bm a_{u_1}},\dots,{\bm a_{u_n}})\det({\bm a_{u_1}},\dots,{\bm a_{u_{k-1}}},{\bm q},{\bm a_{u_{k+1}}},\dots,{\bm a_{u_n}}) > 0,
\det(A_{B})\det(A_{B}[i,{\bm q}]) < 0.
\end{align}
The orientation of the edge is in correspondence with the orientation computed from the viewpoint of $B \oplus \{i\}$. There is a unique \emph{sink}, which is the vertex $S \subseteq [n]$ with $A_{S}^{-1}{\bm q} \geq 0$ corresponding to the unique solution to the PLCP. 
In this model, SPP algorithms behave as path-following algorithms that seek the sink. The orientations of $n$-cubes arising in this way from PLCPs have some good combinatorial properties: 
the {\it unique-sink orientation property}~\cite{SW78} and the {\it Holt-Klee property}~\cite{GMR08}. We call them {\it PLCP-orientations} in the remainder. They are also known as \emph{P-matrix unique-sink orientation} (P-USOs) in the literature.

%\begin{figure}[h]
%\begin{center}
%\includegraphics[scale=0.3]{cube.eps}
%\end{center}
%\caption{Boolean representation of the 3-cube}
%\end{figure}

\subsection*{Unique-sink orientations (USOs)}
An orientation of a polytopal graph is a {\it unique-sink orientation (USO)}
if it has a unique source and unique sink in every nonempty face.
USOs provide a combinatorial model for many optimization problems, such as {\it linear programming} (LP) and the \emph{smallest enclosing ball} (SEB) problem~\cite{SW01}. An \emph{LP-orientation} is a USO of a polytopal graph arising from a linear objective function. USOs of $n$-cubes are particularly interesting. They not just model the PLCP, but also the SEB problem. Even general LP can be reduced to finding the sink of a USO of the $n$-cube~\cite{GS06}. Szab\'o and Welzl~\cite{SW01} proved that finding the sink of a general USO of the $n$-cube can be done by evaluating $O(1.61^n)$ vertices.

PLCP-orientations build a proper subclass of the USOs of the $n$-cube, which follows from enumeration results~\cite{SW78} and counting results~\cite{FGKS13} on USO classes. It is unknown whether there exists a combinatorial characterization of PLCP-orientations.

\subsection*{Our contribution}
SPP algorithms have been studied extensively as candidates for polynomial-time algorithms for the PLCP.
However, no efficient pivot rule is known,
and thus further investigation of PLCP-orientations is of importance.
It would therefore be useful to have a database of PLCP-orientations available.
In this paper, we study the enumeration of PLCP-orientations.
%More precisely, we consider the following problem.
\begin{quote}
{\bf Problem:} Enumerate all PLCP-orientations of the $n$-cube.
\end{quote}

We proceed in two steps.
Since no combinatorial characterization of PLCP-orientations is known, 
we first enumerate a suitable superset.
Todd~\cite{T84} introduced a combinatorial abstraction of the LCP in terms of {\it oriented matroids}, which is known as 
the {\it oriented matroid complementarity problem} (OMCP).
Oriented matroids are characterized by simple axioms, and efficient enumeration algorithms are available~\cite{BGdO00,FF02,FF03}.
The PLCP-orientations are combinatorial objects, and thus extend to the setting of oriented matroids. We then speak of {\it POMCP-orientations}. %~\cite{K12}.

After the enumeration of POMCP-orientations is completed, we check realizability of the associated oriented matroids in order to
identify the PLCP-orientations.
In general, checking oriented matroids for realizability is as difficult as solving general polynomial equality and inequality systems
with integer coefficients~\cite{M88}.
We use the method that has been recently proposed by Fukuda, Miyata, and Moriyama~\cite{FMM12}, which decides realizability
in reasonable time for small instances. 
In this way, enumeration of PLCP-orientations of the $4$-cube is performed.
%%%%
\begin{thm}
There are $6,910$ POMCP-orientations of the $4$-cube up to isomorphism\footnote{The PLCP-orientations are closed under isomorphism~\cite{SW78}.}, and all of them are also PLCP-orientations.
\end{thm}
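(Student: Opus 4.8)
The plan is to prove this by an exhaustive, computer-assisted enumeration in three phases: (i) generate every oriented matroid that abstracts a P-matrix in the sense of Todd's OMCP~\cite{T84}; (ii) read off, from each such oriented matroid, the orientation it induces on the $4$-cube, and reduce the resulting list modulo combinatorial isomorphism; and (iii) decide, for each surviving orientation class, whether it is realized by a genuine P-matrix, thereby identifying which POMCP-orientations are in fact PLCP-orientations.

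For phase (i) I would first make the abstraction precise. By Characterization~(B) of Theorem~\ref{thm:P_matrix}, a matrix $M\in\mathbb R^{n\times n}$ is a P-matrix exactly when the determinants $\det(A_B)$, $B\subseteq[n]$, alternate in sign over the cube of complementary bases, that is, $\det(A_B)\det(A_{B\oplus\{i\}})<0$ for all $B$ and all $i\in[n]$; nondegeneracy adds the conditions $\det(A_B[i,{\bm q}])\neq 0$. Replacing determinants by basis orientations (chirotope values), one abstracts this to a rank-$n$ oriented matroid on a ground set that represents the columns of $(I_n,-M,{\bm q})$, hence on $2n+1$ elements, subject to the requirements that its chirotope alternate on the $2^n$ complementary bases (the abstract P-condition) and be nonzero on every basis containing the distinguished element representing ${\bm q}$ (nondegeneracy). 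Each such \emph{P-oriented matroid} induces an orientation of the $n$-cube by evaluating the sign condition \eqref{PLCP} on its chirotope, and these induced orientations are, by definition, the POMCP-orientations. I would then enumerate all P-oriented matroids for $n=4$ using the oriented-matroid enumeration algorithms of~\cite{BGdO00,FF02,FF03}; the alternating-sign requirement is very restrictive and should keep the search within reach.

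For phase (ii), I would compute the induced $4$-cube orientation of each P-oriented matroid and collect all orientations that arise. Two orientations are isomorphic exactly when they are related by an automorphism of the $n$-cube graph---the hyperoctahedral group, of order $2^{n}n!=384$ for $n=4$, generated by the relabelings of $[n]$ and the coordinate reflections $\{i,n+i\}$ (the interchanges $w_i\leftrightarrow z_i$). Canonicalizing each orientation under this group and deduplicating should leave exactly $6{,}910$ classes. As a consistency check, running the same pipeline for $n=3$ must reproduce the count of Stickney and Watson~\cite{SW78}.

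Phase (iii) is where I expect the real difficulty to lie. A POMCP-orientation is a PLCP-orientation if and only if \emph{some} P-oriented matroid inducing it is realizable over $\mathbb R$: any realization can be normalized, by a linear transformation and positive rescalings of the elements, to the form $(I_n,-M,{\bm q})$ with $M$ a P-matrix and ${\bm q}$ nondegenerate, while conversely every such $M$ yields a realization. So for each of the $6{,}910$ classes I would gather the P-oriented matroids mapping onto it and test them for realizability by the method of Fukuda, Miyata, and Moriyama~\cite{FMM12}; whenever a realization is returned, it can be rationalized and the $2^n$ principal-minor inequalities of Definition~\ref{defn:p_matrix} verified in exact arithmetic, producing a certified P-matrix. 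Since deciding realizability of oriented matroids is as hard as the existential theory of the reals~\cite{M88}, this phase is the computational bottleneck and the main obstacle of the proof. The key outcome is that the search succeeds in every single case: each of the $6{,}910$ POMCP-orientations of the $4$-cube admits a realizable P-oriented matroid, hence an honest P-matrix certifying it, so all of them are PLCP-orientations, as claimed.
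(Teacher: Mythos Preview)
Your three-phase plan---enumerate the abstract P-structures, read off the induced $4$-cube orientations and reduce by isomorphism, then decide realizability via~\cite{FMM12}---is the paper's strategy, and your observation that a POMCP-orientation is a PLCP-orientation exactly when some inducing oriented matroid is realizable is the content of Proposition~\ref{prop:realPOMCP}. The differences are in how the enumeration and the reduction are organized. The paper does not enumerate rank-$n$ oriented matroids on $[2n+1]$ in one shot; it first extracts the P-matroids in OM$(4,8)$ from the Finschi--Fukuda database~\cite{FF} and then computes all uniform single-element extensions of representatives via~\cite{FF02,FF03}, with a perturbation lemma justifying the restriction to uniform objects. This two-stage decomposition reuses an existing catalogue for the harder half of the search. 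More significantly, before invoking the realizability oracle the paper reduces not only by cube isomorphism but also by \emph{facet switches} (Definition~\ref{defn:facet_switch}), which it shows correspond to reorientations of the underlying P-matroid by self-complementary sets $A=\overline A$ (Proposition~\ref{prop:IsoFacPUSO}) and which are known to preserve the PLCP property~\cite{SW78}. This collapses the $6{,}910$ isomorphism classes to just $589$ classes that must actually be tested for realizability---roughly an order-of-magnitude saving on the step you correctly identify as the bottleneck. Your route is sound; the paper's is the same route with these two shortcuts built in.
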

The related data is available at
{\tt https://sites.google.com/site/hmiyata1984/lcp}.

Through the computational experiments, we also gained new insights into the combinatorial structure of P-matrices. Furthermore, we came across an interesting subclass of P-matrices whose characterization is completely combinatorial. The related details are discussed in Section \ref{sec:exp}.

\subsection*{Related work}
The first study of PLCP-orientations is due to Stickney and Watson~\cite{SW78}, who
enumerated the PLCP-orientations and the USOs of the $3$-cube.
G\"artner and Kaibel~\cite{GK98} enumerated the LP-orientations of the $3$-cube.
Morris~\cite{M02} proved that every LP-orientation of the $n$-cube is a PLCP-orientation.
Schurr~\cite{S04} enumerated the USOs of the $4$-cube. The results are summarized in Table~\ref{tab:1}.

\begin{table} [h!]
 \centering
 \begin{tabular}{lcc}
   & $3$-cube & $4$-cube \\ [0.5ex] \hline \\ [-2ex]
  USOs & 19  (\cite{SW78}) & 14,614 (\cite{S04}) \\
  PLCP-orientations & 17 (\cite{SW78}) & \phantom{1}6,910 \phantom{([42])} \\
	LP-orientations & 16 (\cite{GK98}) & unknown
 \end{tabular}
 \caption{The number of USOs up to isomorphism} \label{tab:1}
\end{table}

\section{Preliminaries on oriented matroids}
Oriented matroids are a useful tool for analyzing pivoting methods for LP and the LCP.
Here, we summarize the basics of oriented matroids. For a complete overview on oriented matroids, the reader is referred to~\cite{BLSWZ99}.

Before presenting axiomatic systems for oriented matroids, we introduce some operations on sign vectors.
Let $E$ be a finite set, hereinafter referred to as the \emph{ground set}.
For $X,Y \in \{ +,-,0\}^E$, the {\it composition} $X \circ Y \in \{ +,-,0 \}^E$ is given by
\[(X \circ Y)_e := 
\begin{cases}
X_e & \text{if $X_e \neq 0$,} \\
Y_e & \text{otherwise.}
\end{cases}
\]
The {\it separation set} of $X$ and $Y$ is the set
\[ S(X,Y):= \{ e \in E \mid X_e = - Y_e \neq 0 \}.\]
Multiplication of signs is analogous to the multiplication of elements in $\{ 1,-1,0 \}$. Sign vector $X$ \emph{conforms to} $Y$, denoted by $X \preceq Y$, if
$X_e=Y_e$ or $X_e=0$ for all $e \in E$.
%%%%%%%%%%%%%%%%%%%%%%%%%%%%%%%%%%%%%%%%%%%%%%%%%%%%%%%%%%%%%%%%%%%%%%%%%%%%%%%%%%%%%%%%%%%%%%%%%%%%%%%%%%%%%%%%%%%%%%%%%%%%%%
\begin{defn}(Vector axioms)\\  
Let $E$ be the ground set. An {\it oriented matroid} in vector representation is a pair ${\cal M}=(E,{\cal V})$, 
where ${\cal V} \subseteq \{ +,-,0\}^E$, that satisfies the following axioms.
\begin{itemize}
\item[(V1)] 
${\bm 0}:=(0,0,\dots, 0) \in {\cal V}$.
\item[(V2)] 
$X \in {\cal V}$ implies $-X \in {\cal V}$.
\item[(V3)] 
For any $X,Y \in {\cal V}$, we have $X \circ Y \in {\cal V}$.
\item[(V4)] 
For any $X,Y \in {\cal V}$ and $e \in S(X,Y)$, \\
there exists $Z \in {\cal V}$ such that $Z_e=0$ and $Z_f=(X \circ Y)_f = (Y \circ X)_f$ 
for all $f \notin S(X,Y)$.
\end{itemize}
An element of the set ${\cal V}$ is a {\it vector} of ${\cal M}$.
\label{vector_axioms}
\end{defn}
%%%
Suppose that $|E|=n$. The {\it rank} of an oriented matroid $(E,{\cal V})$ is $r$, where $n-r$ is the length of a maximal chain of vectors:
${\bm 0} \prec X_1 \prec \dots \prec X_{n-r}$ for $X_1,\dots,X_{n-r} \in {\cal V}$.
Let OM($r,n$) denote the set of all rank $r$ oriented matroids with $n$ elements.

Oriented matroids can be specified by alternative axiomatic systems. They, for instance, admit a characterization in terms of basis orientations.
%%%%
\begin{defn}(Chirotope axioms) \label{def:ChiroAxioms}\\
Let $E$ be the ground set and $r \in \mathbb{N}$ such that $r \leq |E|$. A {\it chirotope} of rank $r$ is a  map  $\chi : E^{r} \rightarrow \{ +,-,0\}$ 
that satisfies the following properties.
\begin{itemize}
\item[(C1)] $\chi$ is not identically zero.
\item[(C2)] $\chi (i_{\sigma (1)},\dots,i_{\sigma (r)}) = {\rm sgn} (\sigma) \chi (i_{1},\dots,i_{r})$
for all $i_1,\dots,i_r \in E$ and any permutation $\sigma$ on $[r]$.
\item[(C3)] For all $i_{1},\dots,i_{r},j_{1},\dots,j_{r} \in E$, the following holds.
\begin{equation*}
\begin{split}
\text{If } \chi (j_{s},i_{2},\dots,i_{r})\cdot \chi (j_{1},\dots,j_{s-1},i_{1},j_{s+1},\dots,j_{r}) \geq 0 \text{ for all } & s \in [r], \text{ then}\\
& \chi (i_{1},\dots,i_{r}) \cdot \chi (j_{1},\dots,j_{r}) \geq 0.
\end{split}
\end{equation*}
\end{itemize}
\label{chirotope_axioms}
\end{defn} 
%%%
We remark that (C3) can be replaced by the following axiom.
\begin{itemize}
\item[(C3')] For all $i_{1},\dots,i_{r},j_{1},\dots,j_{r} \in E$, the set 
$$\{ \chi (j_{s},i_{2},\dots,i_{r})\cdot \chi (j_{1},\dots,j_{s-1},i_{1},j_{s+1},\dots,j_{r}) \mid s \in [r] \} \cup \{ - \chi (i_{1},\dots,i_{r}) \cdot \chi (j_{1},\dots,j_{r}) \}$$
is a superset of $\{ +,- \}$ or equals $\{ 0 \}$.
\end{itemize}

Every rank $r$ oriented matroid ${\cal M}=(E, {\cal V})$ is also represented by a pair $(E, \{ \chi, -\chi \})$ for some chirotope $\chi$ of rank $r$. 
The chirotope and vectors can be obtained from each other. See, for instance, Finschi's thesis~\cite{F}. 

Consider a vector configuration $V=({\bm v_i})_{i \in [n]} \in \mathbb{R}^{r \times n}$. The associated map $\chi_V: [n]^r \to \{+,-,0\}$ given by
\[ (i_1,\dots,i_r) \mapsto \sgn \det({\bm v_{i_1}},\dots,{\bm v_{i_r}}) \]
is a chirotope of rank $r$ and specifies an oriented matroid ${\cal M}:=([n],\{ \chi_V, -\chi_V\})$. 
The vectors $\cal V$ of $\cal M$ are then given by
$${\cal V}:=\{(\sgn x_1, \ldots, \sgn x_n) \mid V {\bm x} = 0 \}.$$
Many oriented matroid cannot be represented in terms of any vector configuration. 
A rank $r$ oriented matroid ${\cal M}=([n], \{ \chi, -\chi\})$ is {\it realizable} if there exists a vector configuration 
$V \in \mathbb{R}^{r \times n}$ such that $\chi = \chi_V$, and {\it nonrealizable} otherwise. 
In the former case, matrix $V$ is a \emph{realization} of $\cal M$.

For an oriented matroid ${\cal M}=(E,\{ \chi, -\chi \})$ of rank $r$, a set $\{i_1, \ldots, i_r \} \subseteq E$ with $\chi(i_1, \ldots, i_r) \neq 0$ 
is a \emph{basis}. Then $\cal M$ is {\it uniform} if $\chi (i_1,\dots,i_r) \neq 0$ for any distinct $i_1,\dots,i_r \in E$. 

For $F \subseteq E$, the pair $(F, \{ \chi|_F, -\chi|_F \})$ is an oriented matroid. 
It is the {\it restriction} of ${\cal M}$ by $F$ and denoted by ${\cal M}|_F$. 

A {\it single-element extension} of ${\cal M}$ is a rank $r$ oriented matroid ${\widehat {\cal M}}$ on ground set 
$E \cup \{ p\}$ for $p \notin E$ that satisfies ${\widehat {\cal M}}|_{E} = {\cal M}$. 
In the remainder, whenever speaking of an extension, we refer to a single-element extension.

There are several natural concepts of isomorphism for oriented matroids.
%%%
\begin{defn} Let ${\cal M}=(E,\{ \chi, -\chi \})$ be a rank $r$ oriented matroid. \label{prop:ReEquiv}
\begin{itemize}
\item For any permutation $\sigma$ on $E$, the pair $(E,\{ \sigma \cdot \chi, -\sigma \cdot \chi \})$ 
with $\sigma \cdot \chi : E^{r} \rightarrow \{ +,-,0\}$ defined by
$$(i_{1},\dots,i_{r}) \mapsto \chi (\sigma (i_{1}), \dots, \sigma (i_{r}))$$
is an oriented matroid. It is denoted by $\sigma \cdot {\cal M}$.

\item For any $A \subseteq E$, the pair $(E,\{{_{-A} \chi},-{_{-A} \chi} \})$ with ${_{-A} \chi} : E^{r} \rightarrow \{ +,-,0\}$ defined by
$$(i_1,\dots,i_r) \mapsto (-1)^{|A \cap \{ i_1,\dots,i_r\} |}\chi(i_1,\dots,i_r)$$
is an oriented matroid. It is denoted by $_{-A} {\cal M}$
\end{itemize}
\end{defn}

Two oriented matroids $\cal M$ and $\cal N$ on $E$ are \emph{relabeling equivalent} 
if $N=\sigma \cdot {\cal M}$ for some permutation $\sigma$ on $E$. 
They are  $\emph{reorientation equivalent}$ if $\cal M$ and $_{-A} \cal N$ are relabeling equivalent 
for some $A \subseteq E$. These notions motivate the definition of a {\it relabeling class} and 
{\it reorientation class} of oriented matroids. Both operations preserve realizability.%%%%%

%In the remainder, if speaking of an oriented matroid $\cal M'$, its chirotope is usually denoted by $\chi '$. 

\subsection{The oriented matroid complementarity problem (OMCP)}
For a matrix $M \in \mathbb{R}^{n \times n}$ and a vector ${\bm q} \in \mathbb{R}^n$, the collection
\begin{align*} 
{\cal V}(M,{\bm q}) := 
\{ (\sgn x_1,\ldots,\sgn x_{2n+1}) \in \{ +,-,0\}^{2n+1} \mid 
 (I_n,-M,-{\bm q}){\bm x} = 0, {\bm x} \in \mathbb{R}^{2n+1} \}
\end{align*}
of sign vectors satisfies the vector axioms of an oriented matroid. Once a sign vector $X \in {\cal V}(M,{\bm q})$ with $X \geq 0$, $X_{2n+1}=+$, and $X_i = 0$ or $X_{i+n}=0$ for each $i \in [n]$ is known, a solution to the LCP$(M,{\bm q})$ is obtained in polynomial time. This motivates the generalization of the LCP in the setting of oriented matroids, which is due to Todd~\cite{T84}.

For $n \in \mathbb{N}$, we consider ground sets $[2n]$ and $[2n +1]$, where the pairs $(i,i+n)$ for $i \in [n]$ are the \emph{complementary elements}. The \emph{complement} of any $i \in [2n]$ is denoted by $\overline{i}$. For $A \subseteq [2n]$, let $\overline{A}:=\{\overline{i} \mid i \in A\}$ . 

For an oriented matroid $\widehat{\cal M}=([2n+1],\widehat{\cal V})$, the {\it oriented matroid complementarity problem} (OMCP) is defined as follows. 
\begin{align*}
\textbf{OMCP(${\widehat {\cal M}}$):}&\\
\text{find } & \text{$X \in {\widehat {\cal V}}$ such that}\\
&X_i \cdot X_{\bar{i}} = 0  \text{ for all $i \in [n]$,}\\
&X \geq 0, X_{2n+1} = +.
\end{align*}

Many solving methods for the LCP translate into the setting of oriented matroids. Todd~\cite{T84} generalized Lemke's algorithm~\cite{L65}.
Klafszky and Terlaky~\cite{KT89} and Fukuda and Terlaky~\cite{FT92} proposed the criss-cross method.
Foniok, Fukuda, and Klaus~\cite{FFK11} proved that every SPP algorithm terminates in linear number of pivot steps on {\it K-matroid OMCPs}, 
which generalizes an algebraic result in~\cite{FFGL09}.

Todd~\cite{T84} considered an abstraction of P-matrices and gave several alternative characterizations.
%%%
\begin{defn}(\cite{T84})
\label{defn:p_matroid}
A {\it P-matroid} is a rank $n$ oriented matroid ${\cal M}=([2n],{\cal V})$ satisfying one of the following equivalent conditions.
\begin{itemize}
\item[(A)]
For every extension $\widehat{{\cal M}}$ of ${\cal M}$, the OMCP($\widehat{{\cal M}}$) has a unique solution.
\item[(B)] 
Let $\chi$ be a chirotope of ${\cal M}$. Then, we have
\begin{align*} 
\chi(b_1,\dots,b_{i-1},i,b_{i+1},\dots,b_n) = -\chi(b_1,\dots,b_{i-1},\bar{i},b_{i+1},\dots,b_n) \neq 0 
\end{align*}
for all $(b_1,\dots,b_n) \in \{ 1,\bar{1}\} \times \dots \times \{ n, \bar{n}\}$ and $i \in [n]$.
\item[(C)]
For every $X \in {\cal V}$, we have $X_i = X_{\bar{i}} \neq 0$ for some $i \in [n]$.
\end{itemize}
\end{defn}
%%%%
Other equivalent definitions can be found in~\cite{FFK11,T84}. 

For every P-matrix $M \in \mathbb{R}^{n \times n}$, matrix $(I_n,-M)$ realizes a P-matroid. Conversely, for every realization $(A_B,-A_N) \in \mathbb{R}^{n \times 2n}$ of a P-matroid, matrix $A_B \in \mathbb{R}^{n \times n}$ is nonsingular and $A_B^{-1}A_N$ is a P-matrix.

A {\it POMCP} is an OMCP on an extension ${\widehat {\cal M}}=([2n+1],\{ \widehat{\chi}, -\widehat{\chi}\})$ of a P-matroid.
The problem is {\it nondegenerate} if ${\widehat \chi}(b_1,\dots,b_{i-1},2n+1,b_{i+1},\dots,b_n) \neq 0$
for all $(b_1,\dots,b_n) \in \{ 1,\bar{1}\} \times \dots \times \{ n,\bar{n}\}$ and $i \in [n]$.

Since SPP algorithms make combinatorial decisions, they immediately translate into the oriented matroid setting. The model of an $n$-cube digraph applies again. Consider any nondegenerate POMCP$(\widehat M)$. For a vertex $B \subseteq [n]$ of the $n$-cube, the edge connecting $B$ with an adjacent $B \oplus \{i\}$ for $i \in [n]$ is oriented towards $B \oplus \{i\}$ if
\begin{align}
\label{OMCP}
 {\widehat \chi}(b_1,\dots,b_n)\cdot {\widehat \chi}(b_1,\dots,b_{i-1},2n+1,b_{i+1},\dots,b_{n}) = -,
\end{align}
where $b_j = j+n$ for $j \in B$ and $b_j = j$ otherwise for all $j \in [n]$. 
%$b_j = 
%\begin{cases} 
%j & \text{ if $u_j = 0$,}\\
%j+n & \text{otherwise,}
%\end{cases}$
%for $j=1,\dots,n$.
Compare \eqref{OMCP} with \eqref{PLCP}.
Vertex $B$ is a sink if and only if
\[ {\widehat \chi}(b_1,\dots,b_n)={\widehat \chi}(2n+1,b_2,b_3,\dots,b_n)={\widehat \chi}(b_1,2n+1,b_2,\dots,b_n)=\dots={\widehat \chi}(b_1,\dots,b_{n-1},2n+1),\]
which is equivalent to the existence of the solution vector $X$ to the POMCP$(\widehat M)$ with
\[ X_i=
\begin{cases}
+ & \text{for $i \in \{ b_1,\dots,b_n,2n+1\}$,}\\
0 & \text{otherwise.}
\end{cases}\]

The arising orientations of the $n$-cube satisfy the USO property~\cite{K12}. We call them {\it POMCP-orientations}. Every PLCP-orientation is obviously a POMCP-orientation.

%%%
\begin{prop} \label{prop:realPOMCP}
Every POMCP-orientation arising from a realizable oriented matroid is a PLCP-orientation.
\end{prop}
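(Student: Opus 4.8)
The plan is to start from a POMCP-orientation that arises from a realizable oriented matroid and to exhibit a genuine PLCP instance producing the same orientation of the $n$-cube. So suppose the POMCP-orientation comes from a nondegenerate POMCP on an extension $\widehat{\cal M}=([2n+1],\{\widehat\chi,-\widehat\chi\})$ of a P-matroid ${\cal M}=([2n],{\cal V})$, and assume $\widehat{\cal M}$ is realizable. First I would fix a realization: a vector configuration in $\mathbb{R}^{n\times(2n+1)}$ whose last column I will call $-{\bm q}$ and whose first $2n$ columns realize ${\cal M}$. By the correspondence recalled just before the definition of a POMCP --- for every realization $(A_B,-A_N)$ of a P-matroid, $A_B$ is nonsingular and $A_B^{-1}A_N$ is a P-matrix --- after multiplying the realization on the left by $A_B^{-1}$ (which does not change any chirotope sign) I may assume the realization has the form $(I_n,-M,-{\bm q})$ with $M$ a P-matrix.

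Next I would check that the instance $\mathrm{LCP}(M,{\bm q})$ is nondegenerate in the sense defined in the PLCP-orientation section, i.e. $\det(A_B[i,{\bm q}])\neq 0$ for all $B\subseteq[n]$ and $i\in[n]$. This is exactly the translation of the nondegeneracy hypothesis on the POMCP: the quantity ${\widehat\chi}(b_1,\dots,b_{i-1},2n+1,b_{i+1},\dots,b_n)$ equals, up to a nonzero factor coming from signs of the basis columns, the sign of a determinant of the form $\det(A_B[i,{\bm q}])$, because replacing the $i$-th slot by the ground-set element $2n+1$ corresponds precisely to replacing the $i$-th column of $A_B$ by $-{\bm q}$. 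Hence nondegeneracy of the POMCP gives nondegeneracy of the PLCP.

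Then I would verify that the two $n$-cube orientations coincide edge by edge. Fix a vertex $B\subseteq[n]$ and an index $i\in[n]$, and set $b_j=j+n$ for $j\in B$ and $b_j=j$ otherwise, as in the definition of a POMCP-orientation. The edge $\{B,B\oplus\{i\}\}$ is oriented towards $B\oplus\{i\}$ in the POMCP-orientation exactly when ${\widehat\chi}(b_1,\dots,b_n)\cdot{\widehat\chi}(b_1,\dots,b_{i-1},2n+1,b_{i+1},\dots,b_n)=-$, by \eqref{OMCP}. Since $\widehat\chi=\chi_V$ for the chosen realization $V=(I_n,-M,-{\bm q})$, the first factor is $\sgn\det(A_B)$ and the second is $\sgn\det(A_B[i,{\bm q}])$ --- the columns are literally the columns of $A_B$ with the $i$-th one swapped for $-{\bm q}$, and the sign flip between $-{\bm q}$ and ${\bm q}$ affects both determinants in the product in the same way and therefore cancels. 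So the product of chirotope values has the same sign as $\det(A_B)\det(A_B[i,{\bm q}])$, and the edge orientation rule \eqref{OMCP} becomes exactly the PLCP rule \eqref{PLCP}. Since this holds for every edge, the $\mathrm{PLCP}(M,{\bm q})$-orientation equals the given POMCP-orientation, which is therefore a PLCP-orientation.

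The only real subtlety --- and the step I would be most careful with --- is the bookkeeping of signs: making sure that the reduction to a realization of the normalized form $(I_n,-M,-{\bm q})$ is legitimate (left multiplication by $A_B^{-1}$ scales the chirotope by the constant sign $\sgn\det A_B^{-1}$, which does not affect the products appearing in \eqref{OMCP}, and relabeling which complementary element plays the role of ``$i$'' versus ``$\bar i$'' at vertex $B$ matches the column-selection convention in $A_B$), and that the substitution of the ground-set element $2n+1$ for the $i$-th slot of the chirotope corresponds to substituting the last realization column $-{\bm q}$ for the $i$-th column of $A_B$, with the sign of ${\bm q}$ versus $-{\bm q}$ tracked consistently so that it cancels in the product. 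Once this dictionary is pinned down, the equivalence of \eqref{OMCP} and \eqref{PLCP} is immediate and the proposition follows.
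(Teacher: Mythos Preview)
Your approach is essentially identical to the paper's: take a realization of $\widehat{\cal M}$, left-multiply by the inverse of the first $n\times n$ block to normalize to the form $(I_n,-M,-{\bm q})$ with $M$ a P-matrix, and observe that the resulting PLCP induces the same $n$-cube orientation. The paper's proof is in fact terser than yours---it simply asserts the final identification without the edge-by-edge verification or the nondegeneracy check you spell out.

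One small slip in your bookkeeping: the sentence ``the sign flip between $-{\bm q}$ and ${\bm q}$ affects both determinants in the product in the same way and therefore cancels'' is not correct, since $\det(A_B)$ does not involve ${\bm q}$ at all. With the realization $(I_n,-M,-{\bm q})$, the second chirotope factor is $\sgn\det(A_B[i,-{\bm q}])=-\sgn\det(A_B[i,{\bm q}])$, so the product in \eqref{OMCP} equals $-\sgn\bigl(\det(A_B)\det(A_B[i,{\bm q}])\bigr)$. This is harmless for the proposition---it just means the PLCP instance that literally matches the orientation is $\mathrm{LCP}(M,-{\bm q})$ rather than $\mathrm{LCP}(M,{\bm q})$ (equivalently, write the last realization column as $-{\bm q}'$ and use ${\bm q}'$), and either way you have exhibited a PLCP producing the given orientation.
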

\begin{proof}
Consider any realizable rank $n$ oriented matroid ${\widehat {\cal M}}$ that defines a POMCP-orientation. Let $(A_B,-A_N,-{\bm q})$ for $A_B, A_N \in \mathbb{R}^{n \times n}$ and ${\bm q} \in \mathbb{R}^n$ be a realization of ${\widehat {\cal M}}$. Since ${\widehat {\cal M}}|_{[2n]}$ is a P-matroid, we have ${\widehat \chi}(1,\dots,n) \neq 0$ 
and thus $\det (A_B) \neq 0$.
Then $(I_n,-A_B^{-1}A_N,-A_B^{-1}{\bm q})$ is another realization of ${\widehat {\cal M}}$. The matrix $A_B^{-1}A_N$ is a P-matrix. The LCP($A_B^{-1}A_N,A_B^{-1}{\bm q}$) induces the same orientation as ${\widehat {\cal M}}$.
\end{proof}

\section{Enumeration of PLCP-orientations}
%Before presenting an enumeration algorithm, let us list all possible isomorphisms of the $n$-cube.
%Let us consider the binary representation of the $n$-cube.
%One possible type of isomorphisms of the $n$-cube is a map $\phi$ taking vertices $s_1\dots s_n$ to $s_{\sigma(1)}\dots s_{\sigma(n)}$
%for some permutation $\sigma$ on $\{ 1,\dots, n \}$ for all $(s_1,\dots,s_n) \in \{ 0,1\}^n$.
%We have another type of isomorphisms 
%maps $\psi$ taking vertices $s_1\dots s_n$ to $\tau_1(s_1)\dots \tau_d(s_n)$ for some permutations $\tau_1,\dots,\tau_n$ on $\{ 0,1 \}$
%for all $(s_1,\dots,s_n) \in \{ 0,1\}^n$.
%Actually, it can be proved that all isomorphims can be generated by compositions of these types of isomorphisms.
%Therefore, we can make isomorphism tests of orientations on the $n$-cube by considering the $n!2^n$ possibilities.
%\\
%\\
Since no combinatorial characterization of PLCP-orientations is known, we first begin with an enumeration of POMCP-orientations.
The following is an overview of our algorithm.
%%%%
\begin{enumerate}
\item Enumerate all uniform P-matroids in OM($n,2n$).
\item Partition the set of uniform P-matroids in OM($n,2n$) with respect to a certain equivalence relation and store representatives.
\item For each equivalence class, compute all uniform extensions of the representative oriented matroid of the class.
\item For each extension, compute the induced POMCP-orientations and partition the set of extensions
with respect to the induced orientations.
\item For each class of extensions, decide whether there is a realizable oriented matroid in the class.
\end{enumerate}
%%%%%
There are exactly $n!2^n$ isomorphisms of the $n$-cube
and thus isomorphism test of its orientations is relatively easy for small $n$.
Hence, we will not pay much attention to this issue.

\subsubsection*{{\bf Step 1.}}
The enumeration of P-matroids can be viewed as a variant of the matroid polytope completion problem, for which a software package based on a backtracking method is available~\cite{B}.
In this paper, however, we focus on enumerations of the PLCP-orientations of the $3$-cube and the $4$-cube, and thus it suffices to consider 
P-matroids in OM($3,6$) and OM($4,8$), which are covered by the database of oriented matroids by Finschi and Fukuda~\cite{FF,FF02,FF03}.
The database consists of representatives for the reorientation classes. In every class, the oriented matroid with maximal reverse lexicographic expression of its chirotope is selected as the representative  (see Table~\ref{database_rep}). For a given representative, the oriented matroids in the same reorientation class can be generated easily.
For more information on the database, see \cite{FF,FF03}.

\vspace{+5mm}

\begin{table}[h]
\begin{center}
\begin{BVerbatim}
            1111211121121231112112123112123123411121121231121231234112123123412345
            2223322332334442233233444233444555522332334442334445555233444555566666
            3344434445555553444555555666666666634445555556666666666777777777777777
            4555566666666667777777777777777777788888888888888888888888888888888888
IC(8,4,1) = ++++++++++++++++++++++++-++++------++++++----+------------------------
IC(8,4,2) = +++++++++++++++++++++++++++-------++------------+--++-----+--+---+--++
IC(8,4,3) = +++++++++++++++++++++++++++-------++--+--+-++--++++++++--+++++++++++--
IC(8,4,4) = +++++++++++++++++++++++++++-------++++++++--+++-------++++--+---++++-+
\end{BVerbatim}
\end{center}
\caption{Chirotope representations in the database of Finschi and Fukuda~\cite{FF}}
\label{database_rep}
\end{table}

There exist nonuniform P-matroids, but every PLCP-orientation arises also from a uniform P-matroid.
Therefore, it is enough to extract the uniform P-matroids only.
%%%%%
\begin{prop}
For every extension ${\widehat {\cal M}}$ of a P-matroid that defines a nondegenerate POMCP, 
there exists a uniform extension of a (uniform) P-matroid that induces the same POMCP-orientation as ${\widehat {\cal M}}$.
\end{prop}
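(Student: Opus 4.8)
The plan is to show that any extension $\widehat{\cal M}$ of a (possibly nonuniform) P-matroid which yields a nondegenerate POMCP can be perturbed into a uniform extension of a uniform P-matroid without changing the induced orientation. The orientation of every cube edge, by equation \eqref{OMCP}, depends only on the sign of a product of two chirotope values $\widehat\chi(b_1,\dots,b_n)$ and $\widehat\chi(b_1,\dots,b_{i-1},2n+1,b_{i+1},\dots,b_n)$; nondegeneracy of the POMCP guarantees that every such value is nonzero. So the combinatorial data that matters is confined to a finite list of nonzero chirotope values, and the goal is to find a uniform oriented matroid agreeing with $\widehat{\cal M}$ on all of them.

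First I would invoke the topological (or projective) representation theorem, realizing $\widehat{\cal M}=([2n+1],\{\widehat\chi,-\widehat\chi\})$ by a pseudosphere arrangement (equivalently, an arrangement of pseudohyperplanes). The standard lexicographic-perturbation / general-position argument for oriented matroids then applies: one can perturb each pseudohyperplane by an arbitrarily small generic amount to obtain a uniform oriented matroid $\widehat{\cal M}'$ on the same ground set, in such a way that every chirotope value that was nonzero in $\widehat\chi$ keeps its sign in $\widehat\chi'$ (only the zero values are resolved to $\pm$). This is a routine fact — it is, for instance, how one shows every oriented matroid is a weak map image of a uniform one — and I would cite it rather than reprove it. Since the nondegeneracy hypothesis forces all the relevant chirotope values in \eqref{OMCP} to be nonzero already, $\widehat{\cal M}'$ induces exactly the same orientation of each cube edge, hence the same POMCP-orientation.

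Next I would check that $\widehat{\cal M}'$ is indeed a uniform extension of a uniform P-matroid. Uniformity of $\widehat{\cal M}'$ gives uniformity of its restriction ${\cal M}':=\widehat{\cal M}'|_{[2n]}$ immediately. To see that ${\cal M}'$ is still a P-matroid, I would use characterization (B) of Definition~\ref{defn:p_matroid}: the P-matroid condition for ${\cal M}$ asserts, for each $(b_1,\dots,b_n)\in\{1,\bar1\}\times\dots\times\{n,\bar n\}$ and each $i\in[n]$, that $\chi(b_1,\dots,i,\dots,b_n)=-\chi(b_1,\dots,\bar i,\dots,b_n)\neq0$. These are again finitely many \emph{nonzero} chirotope values, so the perturbation preserves their signs, and in particular preserves the equality of signs of the two sides (both being, say, $+$ after possibly reorienting, or more carefully: the product $\chi(b_1,\dots,i,\dots,b_n)\cdot\chi(b_1,\dots,\bar i,\dots,b_n)$ is $-$ and stays $-$). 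Hence ${\cal M}'$ satisfies (B) and is a P-matroid, necessarily uniform, and $\widehat{\cal M}'$ is a uniform single-element extension of it with ${\cal M}':=\widehat{\cal M}'|_{[2n]}$.

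The one genuine subtlety — and the step I would be most careful about — is that the perturbation must resolve \emph{only} the zero chirotope values and must not flip any nonzero one; this is exactly why the nondegeneracy assumption on the POMCP (and the built-in nondegeneracy $\neq0$ in part (B) of the P-matroid definition) is essential. One has to make sure that after perturbing, the new oriented matroid is genuinely uniform (no accidental remaining zeros) \emph{and} still satisfies axiom (C3)/(C3$'$); both follow from the standard construction via a realizable generic perturbation applied inside a topological representation, but it is worth stating precisely that the perturbation is chosen generically so that all $\binom{2n+1}{n}$ bases become nonzero while the already-nonzero values are untouched. Everything else is bookkeeping: the orientation of the cube is a function, through \eqref{OMCP}, of a finite set of signs that are all preserved, so $\widehat{\cal M}'$ and $\widehat{\cal M}$ induce the same POMCP-orientation, completing the proof.
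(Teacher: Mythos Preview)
Your argument is correct, and the overall strategy---perturb $\widehat{\cal M}$ to a uniform oriented matroid while preserving every nonzero chirotope value, then observe that all the signs relevant to \eqref{OMCP} and to characterization (B) of a P-matroid are among those preserved---is exactly the paper's strategy as well.

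Where you differ is in how the perturbation is carried out. You invoke the topological representation theorem and a generic perturbation of the pseudoarrangement, citing as a black box the fact that every oriented matroid is the weak-map image of a uniform one. The paper instead gives the standard \emph{combinatorial} proof of that very fact: take any uniform chirotope $\epsilon$ of rank $n$ on $[2n+1]$, set $\widehat\chi_\epsilon(i_1,\dots,i_n):=\widehat\chi(i_1,\dots,i_n)\circ\epsilon(i_1,\dots,i_n)$, and verify axioms (C1)--(C3) directly (the only nontrivial case in (C3) being when $\widehat\chi(i_1,\dots,i_n)\cdot\widehat\chi(j_1,\dots,j_n)=0$, where (C3$'$) for $\widehat\chi$ forces all the cross-terms to vanish and the inequality is then inherited from $\epsilon$). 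Your route is shorter but leans on heavier machinery; the paper's is fully self-contained and avoids the topological representation theorem entirely. Both yield the same conclusion, and your ``genuine subtlety'' paragraph is precisely the content that the paper's explicit axiom check makes rigorous.
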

\begin{proof}
We apply a perturbation argument. Suppose that $\widehat{\cal M}=([2n+1], \{ \widehat{\chi}, -\widehat{\chi} \})$ is of rank $n$. 
Let $\epsilon$ be any uniform chirotope of rank $n$ on $[2n+1]$. Consider the map $\widehat{\chi}_{\epsilon}:[2n+1]^n \rightarrow \{ +,-,0\}$ defined by
\[ (i_1, \ldots, i_n) \mapsto {\widehat \chi} (i_1, \ldots, i_n) \circ \epsilon(i_1, \ldots, i_n)\]

%\[ {\widehat \chi''}(b) = \begin{cases} {\widehat \chi} (b) & \text{if ${\widehat \chi} (b) \neq 0,$} \\ {\widehat \chi'}(b) & \text{otherwise.} \end{cases}\]
The map $\widehat{\chi}_{\epsilon}$ satisfies the chirotope axioms.
Since it is clear that it satisfies (C1) and (C2), we only verify (C3).
Suppose that there exist $i_1,\dots,i_n$, $j_1,\dots,j_n \in [2n+1]$ such that
\[ \widehat{\chi}_{\epsilon}(j_s,i_2,\dots,i_n) \cdot \widehat{\chi}_{\epsilon}(j_1,\dots,j_{s-1},i_1,j_{s+1},\dots,j_n) \geq 0 \] 
for all $s \in [n]$ while $\widehat{\chi}_{\epsilon}(i_1,\dots,i_n) \cdot \widehat{\chi}_{\epsilon}(j_1,\dots,j_n) = -$. 
By the definition of $\widehat{\chi}_{\epsilon}$, it holds that 
\[ {\widehat \chi}(j_s,i_2,\dots,i_n) \cdot {\widehat \chi}(j_1,\dots,j_{s-1},i_1,j_{s+1},\dots,j_n) \geq 0 \]
for all $s \in [n]$. Since ${\widehat \chi}$ is a chirotope, this leads to ${\widehat \chi}(i_1,\dots,i_n)\cdot {\widehat \chi}(j_1,\dots,j_n) \geq 0$.
First, suppose that ${\widehat \chi}(i_1,\dots,i_n)\cdot {\widehat \chi}(j_1,\dots,j_n) = +$. 
Then we have $\widehat{\chi}_{\epsilon}(i_1,\dots,i_n)\cdot \widehat{\chi}_{\epsilon}(j_1,\dots,j_n) = +$, which is a contradiction. 
Secondly, suppose that ${\widehat \chi}(i_1,\dots,i_n)\cdot {\widehat \chi}(j_1,\dots,j_n) = 0$.
By using (C3'), we obtain 
\[ {\widehat \chi}(j_s,i_2,\dots,i_n) \cdot {\widehat \chi}(j_1,\dots,j_{s-1},i_1,j_{s+1},\dots,j_n) = 0 \]
for all $s \in [n]$. Therefore, we have
\begin{align*}
 0 & \leq \widehat{\chi}_{\epsilon}(j_s,i_2,\dots,i_n) \cdot \widehat{\chi}_{\epsilon}(j_1,\dots,j_{s-1},i_1,j_{s+1},\dots,j_n) \\ 
& = \epsilon(j_s,i_2,\dots,i_n) \cdot \epsilon(j_1,\dots,j_{s-1},i_1,j_{s+1},\dots,j_n) 
\end{align*}
for all $s \in [n]$.
Since $\epsilon$ is also a chirotope, we have 
$\epsilon(i_1,\dots,i_n) \cdot \epsilon(i_1,\dots,i_n) \geq 0$ and thus $\widehat{\chi}_{\epsilon}(i_1,\dots,i_n) \cdot {\widehat \chi}_{\epsilon}(i_1,\dots,i_n) \geq 0$, which is a contradiction.

The oriented matroid $\widehat {\cal M}_{\epsilon}:=([2n+1],\{ \widehat{\chi}_{\epsilon}, -\widehat{\chi}_{\epsilon}\} )$ is uniform, induces the same orientation as $\widehat {\cal M}$, and the restriction $\widehat {\cal M}_{\epsilon}|_{[2n]}$ is a uniform P-matroid.
\end{proof}
\\
\\
Note that if $\widehat{\cal M}$ is realizable, then there always exists a perturbed oriented matroid $\widehat{\cal M}_{\epsilon}$ that is realizable. 

\subsubsection*{{\bf Step 2.}}
To reduce the number of P-matroids in the computation, we partition the set of P-matroids into certain equivalence classes.
%%%
\begin{defn}
Let ${\cal M}$ and ${\cal N}$ be rank $n$ oriented matroids on ground set $[2n]$.
\begin{itemize}
\item 
${\cal M}$ and ${\cal N}$ are {\it C-equivalent} 
if ${\cal N} = \sigma \cdot {\cal M}$ for some permutation $\sigma$ on $[2n]$ with $\sigma (\overline{i}) = \overline{\sigma (i)}$ for all $i \in [2n]$.

\item
${\cal M}$ and ${\cal N}$ are {\it FS-equivalent} if ${\cal N} = {_{-A}{\cal M}}$ for some $A \subseteq [2n]$ with
$\overline{A}=A$.

\item ${\cal M}$ and ${\cal N}$ are {\it CFS-equivalent} if they can be transformed into each other by a combination of the above two operations.
\end{itemize}
\end{defn}%%%%%
The concept of C-equivalence is related to isomorphisms of USOs. The definition of FS-equivalence is motivated by the notion of face switches for USOs introduced by Stickney and Watson~\cite{SW78}. In this paper, however, we use the terminology facet switch instead.
%%%%%
\begin{defn}
 For a USO of the $n$-cube, a {\it facet switch} is the operation of reversing the orientation of all edges connecting any two opposite facets.
\label{defn:facet_switch}
\end{defn}
%%%%%
PLCP-orientations are closed under isomorphism and facet switches~\cite{SW78}. This convenient property translates into the setting of oriented matroids. It follows as a corollary of Proposition~\ref{prop:IsoFacPUSO} below.
%Given any P-matroid, all C-equivalent and FS-equivalent oriented matroids are P-matroids.
\begin{prop}
Let ${\cal M}$ be a P-matroid. Every oriented matroid $\cal N$ that is C- or FS-equivalent to ${\cal M}$ is a P-matroid.
\end{prop}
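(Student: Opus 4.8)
The plan is to verify Characterization~(C) of Definition~\ref{defn:p_matroid} for $\cal N$, exploiting the fact that both the C- and the FS-operation transform the set of vectors in a way that respects the partition of $[2n]$ into complementary pairs. First I would recall how the two operations act on vectors. By the chirotope--vector correspondence (and as one sees immediately on a realization, via $(v_e)_{e\in[2n]}\mapsto(v_{\sigma(e)})_{e\in[2n]}$ in the first case, and by flipping the vectors $v_e$ with $e\in A$ in the second), the vectors of $\sigma\cdot{\cal M}$ are $\{\sigma\cdot X\mid X\in{\cal V}\}$ with $(\sigma\cdot X)_e:=X_{\sigma(e)}$, and the vectors of ${}_{-A}{\cal M}$ are $\{{}_{-A}X\mid X\in{\cal V}\}$ with $({}_{-A}X)_e:=-X_e$ for $e\in A$ and $({}_{-A}X)_e:=X_e$ otherwise. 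In both cases the indicated map ${\cal V}\to{\cal V}_{\cal N}$ is a bijection sending ${\bm 0}$ to ${\bm 0}$, so it suffices to check that it preserves the property occurring in~(C), namely that some complementary pair carries equal nonzero entries.

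Suppose first ${\cal N}=\sigma\cdot{\cal M}$ with $\sigma(\bar i)=\overline{\sigma(i)}$ for all $i$, so $\sigma$ maps complementary pairs to complementary pairs. Let $Z=\sigma\cdot X$ for a nonzero $X\in{\cal V}$. By~(C) for $\cal M$ there is $j\in[n]$ with $X_j=X_{\bar j}\neq0$. Let $i\in[n]$ be the pair index determined by $\sigma^{-1}(j)\in\{i,\bar i\}$; then $\{\sigma(i),\sigma(\bar i)\}=\{j,\bar j\}$, hence $\{Z_i,Z_{\bar i}\}=\{X_{\sigma(i)},X_{\sigma(\bar i)}\}=\{X_j,X_{\bar j}\}$, so $Z_i=Z_{\bar i}=X_j\neq0$. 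Thus $\cal N$ satisfies~(C) and is a P-matroid. Suppose next ${\cal N}={}_{-A}{\cal M}$ with $\overline A=A$, and let $Z={}_{-A}X$ for a nonzero $X\in{\cal V}$; pick $j\in[n]$ with $X_j=X_{\bar j}\neq0$. Since $\overline A=A$, the indices $j$ and $\bar j$ lie both in $A$ or both outside $A$; in the former case $Z_j=-X_j=-X_{\bar j}=Z_{\bar j}\neq0$, in the latter $Z_j=X_j=X_{\bar j}=Z_{\bar j}\neq0$. So~(C) holds for $\cal N$, which is again a P-matroid.

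I do not expect a genuine obstacle here: the whole content is the observation that the hypotheses singling out C- and FS-equivalence among all relabelings and reorientations --- that $\sigma$ commutes with complementation, and that $A$ is complementation-invariant --- are exactly what keeps the per-pair structure used by Characterization~(C) invariant. (One could instead verify Characterization~(B) directly: for the FS-case this is immediate because $\overline A=A$ makes the prefactor $(-1)^{|A\cap\{\dots\}|}$ the same whether position $i$ carries $i$ or $\bar i$, and for the C-case it follows after reordering the arguments via axiom~(C2); but the route through~(C) is shorter.) Finally, since every CFS-equivalence is a composition of C- and FS-operations, the same argument shows that the class of P-matroids is closed under CFS-equivalence as well, which is what the subsequent steps of the enumeration algorithm require.
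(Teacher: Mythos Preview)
Your proof is correct. The paper, however, verifies Characterization~(B) directly on chirotopes rather than working with~(C) via vectors: for C-equivalence it unwinds $\sigma\cdot\chi(b_1,\dots,i,\dots,b_n)$ and uses $\sigma(\bar i)=\overline{\sigma(i)}$ to pass from $\sigma(i)$ to $\overline{\sigma(i)}=\sigma(\bar i)$, and for FS-equivalence it observes that $\overline A=A$ forces $|A\cap\{b_1,\dots,i,\dots,b_n\}|=|A\cap\{b_1,\dots,\bar i,\dots,b_n\}|$, so the prefactor $(-1)^{|A\cap\{\cdots\}|}$ is unchanged when $i$ is swapped for $\bar i$. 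Your route through~(C) is equally short and arguably more conceptual, since the ``complementary pair with equal nonzero signs'' condition is manifestly preserved by any operation that respects the pair partition and acts on a pair uniformly; the paper's route has the small practical advantage of staying entirely inside the chirotope formalism that the surrounding sections already use (no appeal to the chirotope--vector translation is needed). You anticipated this alternative in your parenthetical remark, and both arguments are valid.
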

\begin{proof}
First, suppose that $\cal N$ is C-equivalent to ${\cal M}=([2n+1],\{ \chi,-\chi \} )$ of rank $n$. 
Let $\sigma$ be a permutation on $[2n]$ with $\sigma (\overline{i}) = \overline{\sigma (i)}$ for all $i \in [2n]$
such that ${\cal N} = \sigma \cdot {\cal M}$.
Pick an arbitrary $(b_1,\dots,b_n) \in \{ 1,\bar{1}\} \times \dots \{ n, \bar{n}\}$.
By (B) in Definition \ref{defn:p_matroid}, we have
\begin{align*}
\sigma \cdot \chi(b_1,\dots,b_{i-1},i,b_{i+1},\dots,b_n) 
&=  \chi (\sigma (b_1),\dots,\sigma (b_{i-1}),\sigma(i),\sigma (b_{i+1}),\dots,\sigma (b_n)) \\
&= -\chi (\sigma (b_1),\dots,\sigma (b_{i-1}),\overline{\sigma(i)},\sigma (b_{i+1}),\dots,\sigma (b_n)) \\
&= -\chi (\sigma (b_1),\dots,\sigma (b_{i-1}),\sigma(\overline{i}),\sigma (b_{i+1}),\dots,\sigma (b_n)) \\
&= - \sigma \cdot \chi (b_1,\dots,b_{i-1},\bar{i},b_{i+1},\dots,b_n).
\end{align*}
for every $i \in [n]$.
Hence ${\cal N}=([2n+1],\{ \sigma \cdot \chi, -\sigma \cdot \chi \} )$ is a P-matroid.

Secondly, suppose that $\cal N$ is FS-equivalent to $\cal M$.
Pick $A \subseteq [2n]$ with $\overline{A} = A$ such that ${\cal N}={_{-A}{\cal M}}$.
Then, we have
\begin{align*}
{_{-A}\chi(b_1,\dots,b_{i-1},i,b_{i+1},\dots,b_n)} 
&= (-1)^{|A \cap \{ b_1,\dots,b_{i-1},i,b_{i+1},\dots,b_n \}|}\chi(b_1,\dots,b_{i-1},i,b_{i+1},\dots,b_n) \\
&= (-1)^{|A \cap \{ b_1,\dots,b_{i-1},\bar{i},b_{i+1},\dots,b_n \}|}\chi(b_1,\dots,b_{i-1},i,b_{i+1},\dots,b_n) \\
&= (-1)\cdot(-1)^{|A \cap \{ b_1,\dots,b_{i-1},\bar{i},b_{i+1},\dots,b_n \}|}\chi(b_1,\dots,b_{i-1},\bar{i},b_{i+1},\dots,b_n) \\
&= -{_{-A}\chi(b_1,\dots,b_{i-1},\bar{i},b_{i+1},\dots,b_n)} 
\end{align*}
for all $(b_1,\dots,b_n) \in \{ 1,\bar{1}\} \times \dots \times \{ n, \bar{n}\}$ and $i \in [n]$.
Hence ${\cal N}=([2n+1],\{ {_{-A}\chi}, -{_{-A}\chi}\} )$ is a P-matroid.
\end{proof}

\begin{prop} \label{prop:IsoFacPUSO}
Let $\cal M$ be a P-matroid and ${\widehat {\cal M}}$ an extension that defines a nondegenerate POMCP.
\begin{itemize}
\item[(1)]
For every C-equivalent $\cal N$ to $\cal M$, there exists an extension inducing an orientation that is isomorphic to the one induced by ${\widehat {\cal M}}$.
\item[(2)]
For every FS-equivalent $\cal N$ to $\cal M$, there exists an extension inducing an orientation that can be transformed into to the one induced by ${\widehat {\cal M}}$ by successive facet switches.
\end{itemize}
\end{prop}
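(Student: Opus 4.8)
The plan is to track explicitly how the extended chirotope $\widehat\chi$ transforms under the relabeling and reorientation operations, and then read off the effect on the orientation rule \eqref{OMCP}. For part (1), suppose $\mathcal N = \sigma\cdot\mathcal M$ for a permutation $\sigma$ on $[2n]$ with $\sigma(\overline i)=\overline{\sigma(i)}$. First I would extend $\sigma$ to a permutation $\widehat\sigma$ on $[2n+1]$ by setting $\widehat\sigma(2n+1):=2n+1$, and define $\widehat{\mathcal N}:=\widehat\sigma\cdot\widehat{\mathcal M}$; since $\widehat\sigma$ fixes the extra element, $\widehat{\mathcal N}|_{[2n]} = \sigma\cdot\mathcal M = \mathcal N$, so $\widehat{\mathcal N}$ is a genuine extension of $\mathcal N$, and it is nondegenerate because the relevant chirotope entries are only permuted, not zeroed. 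Now for a vertex $B\subseteq[n]$ of the cube with associated tuple $(b_1,\dots,b_n)$, the tuple associated to $B$ in $\mathcal N$ has entries $b_j$ or $\overline{b_j}$; the key observation is that $\widehat\sigma$ maps complementary pairs to complementary pairs and fixes $2n+1$, so applying $\widehat\sigma$ to the two tuples appearing in \eqref{OMCP} for $\widehat{\mathcal N}$ yields exactly the two tuples appearing in \eqref{OMCP} for $\widehat{\mathcal M}$ at a correspondingly relabelled vertex $\pi(B)$, where $\pi$ is the cube automorphism induced by $\sigma$ (a coordinate permutation of $[n]$ possibly composed with flips $i\mapsto\overline i$, i.e. a coordinate reflection). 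Hence the edge $B\to B\oplus\{i\}$ is oriented in $\widehat{\mathcal N}$ exactly as the edge $\pi(B)\to\pi(B)\oplus\{\pi(i)\}$ is in $\widehat{\mathcal M}$, which says precisely that the orientation of $\widehat{\mathcal N}$ is the image of that of $\widehat{\mathcal M}$ under the cube isomorphism $\pi$.

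For part (2), suppose $\mathcal N={}_{-A}\mathcal M$ with $\overline A=A$, $A\subseteq[2n]$. I would define $\widehat{\mathcal N}:={}_{-A}\widehat{\mathcal M}$, viewing $A$ as a subset of $[2n+1]$ not containing $2n+1$; then $\widehat{\mathcal N}|_{[2n]}={}_{-A}\mathcal M=\mathcal N$, so this is an extension, and it is nondegenerate since reorientation only changes signs, never vanishing. Now compute the orientation: for a vertex $B$ with tuple $(b_1,\dots,b_n)$, the term ${}_{-A}\widehat\chi(b_1,\dots,b_n)\cdot{}_{-A}\widehat\chi(b_1,\dots,b_{i-1},2n+1,b_{i+1},\dots,b_n)$ differs from the corresponding product for $\widehat\chi$ by the factor $(-1)^{|A\cap\{b_1,\dots,b_n\}|+|A\cap(\{b_1,\dots,b_n\}\setminus\{b_i\})|}=(-1)^{|A\cap\{b_i\}|}$, since $2n+1\notin A$ and the other coordinates contribute squared signs. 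Thus the edge in direction $i$ at $B$ is reversed relative to $\widehat{\mathcal M}$ precisely when $b_i\in A$. Because $\overline A=A$, membership $b_i\in A$ depends only on the pair $\{i,\overline i\}$, not on whether $i\in B$; so for each coordinate $i$ with $\{i,\overline i\}\subseteq A$, \emph{all} edges in direction $i$ get reversed, and for each such $i$ with $\{i,\overline i\}\cap A=\emptyset$ none do (the mixed case is impossible since $\overline A=A$). Reversing all direction-$i$ edges is exactly a facet switch in Definition~\ref{defn:facet_switch} (it flips all edges between the two facets $\{B:i\in B\}$ and $\{B:i\notin B\}$), so the orientation induced by $\widehat{\mathcal N}$ is obtained from that of $\widehat{\mathcal M}$ by the successive facet switches indexed by $\{\,i\in[n] : \{i,\overline i\}\subseteq A\,\}$.

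The routine parts are the sign bookkeeping, which I would carry out once carefully in the reorientation case and quote by symmetry elsewhere; the one point deserving care is the interplay between the two notions of ``vertex tuple'': the combinatorial vertex $B\subseteq[n]$ versus the chirotope argument $(b_1,\dots,b_n)\in\{1,\overline1\}\times\dots\times\{n,\overline n\}$, together with the fact that the cube automorphism $\pi$ in part (1) must be read off correctly from $\sigma$ (a permutation of the pairs $\{i,\overline i\}$ gives the coordinate permutation, and swapping $i\leftrightarrow\overline i$ gives a reflection in coordinate $i$). Checking that $\pi$ is well defined and is an automorphism of the cube graph — and that it is exactly the map realising the claimed isomorphism of orientations — is where most of the (still short) work lies. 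Finally I would remark, as the text does after Proposition~\ref{prop:IsoFacPUSO}, that in both cases realizability is preserved, since relabeling and reorientation of a realizable oriented matroid yield realizable oriented matroids, so the construction stays inside the class of PLCP-orientations.
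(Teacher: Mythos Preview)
Your proposal is correct and follows essentially the same approach as the paper: in both parts you build the extension of $\mathcal N$ by applying the same relabeling $\widehat\sigma$ (resp.\ reorientation by $A$) to $\widehat{\mathcal M}$, and then compare the two sides of the orientation rule \eqref{OMCP}. Your computation in part (2) is in fact slightly more explicit than the paper's, since you spell out why the exponent reduces to $|A\cap\{b_i\}|$ and then invoke $\overline A=A$ to conclude that this depends only on the coordinate direction $i$, whereas the paper simply writes $(-1)^{|A\cap\{i\}|}$ directly.
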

\begin{proof}
 Suppose that $\widehat{\cal M}=([2n+1],\{ \widehat{\chi}, -\widehat{\chi}\})$ is of rank $n$.

(1). Let $\sigma$ be a permutation on $[2n]$ with $\sigma (\overline{i}) = \overline{\sigma (i)}$ for all $i \in [n]$ such that ${\cal N}= \sigma \cdot {\cal M}$. The oriented matroid $\widehat{\sigma} \cdot \widehat{\cal M}$, where $\widehat{\sigma}$ is the permutation on $[2n+1]$ with $\widehat{\sigma}|_{[2n]}=\sigma$, is an extension of P-matroid ${\cal N}$. The orientation of the edge connecting $B \subseteq [n]$ with $B \oplus \{i\}$ in the USO induced by $\widehat{\sigma} \cdot \widehat{\cal M}$ is determined by 
\begin{equation*}
 \begin{split}
 {\widehat\chi}(\sigma (b_1), \dots, \sigma (b_n)) & \cdot {\widehat \chi}(\sigma (b_1),\dots, \sigma (b_{i-1}) ,2n+1, \sigma (b_{i+1}),\dots, \sigma (b_{n})),
 \end{split}
\end{equation*}
where $b_j:=j+n$ if $j \in B$ and $b_j:=j$ otherwise for all $j \in [n]$. By (C2) in Definition \ref{def:ChiroAxioms}, the orientation therefore corresponds to the orientation of the edge in the USO induced by ${\widehat {\cal M}}$ that connects $C$ with $C \oplus \{\sigma (b_i) \mod n \}$, where 
$C:=\{ \sigma (b_j) - n \mid \sigma (b_j) > n, j \in [n] \}.$ 
In other words, the permutation $\sigma$ induces
 an isomorphism between the oriented $n$-cubes.

(2). Let $A \subseteq [2n]$ with $\overline{A}=A$ be such that ${\cal N}= {_{-A}{\cal M}}$. Then $_{-A} \widehat{\cal M}$ is an extension of P-matroid ${\cal N}$. The orientation of the edge connecting $B \subseteq [2n]$ with $B \oplus \{i\}$ in the USO induced by $_{-A} \widehat{\cal M}$ is determined by 
\begin{equation*}
 \begin{split}
 {_{-A}{\widehat\chi}}(b_1,\dots,b_n) & \cdot {_{-A}{\widehat \chi}}(b_1,\dots,b_{i-1},2n+1,b_{i+1},\dots,b_{n}) \\
   & = (-1)^{|A \cap \{i\}|} {\widehat\chi}(b_1,\dots,b_n) \cdot {\widehat \chi}(b_1,\dots,b_{i-1},2n+1,b_{i+1},\dots,b_{n}),
 \end{split}
\end{equation*}
where $b_j=j+n$ if $j \in B$ and $b_j=j$ otherwise for all $j \in [n]$. Hence, the orientation is opposite to the orientation of the same edge in the USO induced by ${\widehat {\cal M}}$ if and only if $i \in A$. 
\end{proof}\\
\\
%%%%% 
Let $\Pi$ be the set of CFS-equivalence classes of P-matroids in OM($n,2n$).
For each $\pi \in \Pi$, we store the representative P-matroid ${\cal M}_{\pi}$, which is determined by lexicographic comparison
of the reverse lexicographic expressions of chirotopes.

\subsubsection*{{\bf Step 3.}}
The next step is to enumerate all uniform extensions of the CFS-equivalence class representatives ${\cal M}_{\pi}$ obtained in Step 2.
We use the algorithm by Finschi and Fukuda~\cite{FF02,FF03} with a slight modification. In order to reduce the workload, we make use of 
the following relation between the extensions of reorientation equivalent oriented matroids.
%%%%
\begin{prop}
\label{prop:perm_reori}
Let ${\cal M}$ and ${\cal N}$ be rank $r$ oriented matroids on $E$ with ${\cal N} = {_{-A}(\sigma \cdot {\cal M})}$ for any permutation $\sigma$ on $E$ and subset $A \subseteq E$. There is a one-to-one correspondence between the extensions of ${\cal M}$ and those of ${\cal N}$:% on the ground set $E \cup \{ p\}$:
\[ {\widehat {\cal M}} \leftrightarrow {_{-A}({\widehat \sigma} \cdot {\widehat {\cal M}})}, \]
where
${\widehat {\cal M}}$ is an extension of ${\cal M}$ on the ground set $E \cup \{ p \}$ and 
${\widehat \sigma}$ is the permutation on $E \cup \{ p \}$ with ${\widehat \sigma}|_E = \sigma$ and ${\widehat \sigma}(p)=p$.
%$\sigma' = 
%\begin{pmatrix}
%1 & \dots & n & n+1 \\
%\sigma (1) & \dots & \sigma (n) & n+1
%\end{pmatrix}$.
\end{prop}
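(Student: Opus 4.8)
The plan is to realize the correspondence $\Phi\colon \widehat{\cal M}\mapsto {_{-A}(\widehat\sigma\cdot\widehat{\cal M})}$ as a composition of two elementary bijections on the class of rank-$r$ oriented matroids on $E\cup\{p\}$ and to check that each of them carries extensions to extensions. I would first record two facts that are immediate from Definition~\ref{prop:ReEquiv}: relabeling by a permutation $\tau$ is a bijection on chirotopes, with inverse ``relabel by $\tau^{-1}$'' (one checks $\rho\cdot(\tau\cdot\psi)=(\tau\circ\rho)\cdot\psi$, composition of permutations), and reorientation by a set $B$ is an involution, ${_{-B}({_{-B}{\cal X}})}={\cal X}$, since the scalar $(-1)^{|B\cap\{i_1,\dots,i_r\}|}$ squares to $1$. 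Moreover both operations preserve the rank, since relabeling is a mere renaming of ground-set elements and reorientation leaves the family of bases unchanged.

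The key step is to observe that these operations are compatible with restriction to $E$, and this is exactly where the hypotheses $\widehat\sigma(p)=p$ and $A\subseteq E$ are used. For a chirotope $\widehat\psi$ on $E\cup\{p\}$ and a tuple $(i_1,\dots,i_r)\in E^{r}$, the entries $\sigma(i_1),\dots,\sigma(i_r)$ again lie in $E$ and $|A\cap\{i_1,\dots,i_r\}|$ is unaffected by whether the intersection is taken in $E$ or in $E\cup\{p\}$; hence $(\widehat\sigma\cdot\widehat\psi)|_{E}=\sigma\cdot(\widehat\psi|_{E})$ and $({_{-A}\widehat\psi})|_{E}={_{-A}(\widehat\psi|_{E})}$. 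Combining this with rank preservation, for any rank-$r$ oriented matroid ${\cal X}$ on $E$ the map ``relabel by $\widehat\sigma$'' is a bijection from the set of extensions of ${\cal X}$ onto the set of extensions of $\sigma\cdot{\cal X}$, and the map ``reorient by $A$'' is a bijection from the extensions of ${\cal X}$ onto the extensions of ${_{-A}{\cal X}}$, since in each case the image again satisfies the rank-$r$ condition and the restriction condition defining a single-element extension.

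Composing these two bijections, $\Phi$ sends the extensions of ${\cal M}$ bijectively onto the extensions of $\sigma\cdot{\cal M}$ and then onto the extensions of ${_{-A}(\sigma\cdot{\cal M})}={\cal N}$, which is precisely the asserted one-to-one correspondence; unwinding the composition, its inverse is $\widehat{\cal N}\mapsto \widehat{\sigma^{-1}}\cdot({_{-A}\widehat{\cal N}})$. I do not expect any genuine difficulty here. The only point that truly requires the stated hypotheses --- and therefore the only place where care is needed --- is the pair of restriction-compatibility identities in the second paragraph: it is the fact that $\widehat\sigma$ fixes the new element $p$ and that the reorientation is performed only inside $E$ that makes ``extend first, then relabel/reorient'' agree with ``relabel/reorient first, then extend.'' Everything else is the formal bijectivity of relabeling together with the involutivity of reorientation.
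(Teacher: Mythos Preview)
Your proof is correct and follows essentially the same approach as the paper's: verify that the proposed map carries extensions of ${\cal M}$ to extensions of ${\cal N}$ by checking the restriction condition $(\cdot)|_{E}$, and exhibit the inverse map $\widehat{\cal N}\mapsto \widehat{\sigma}^{-1}\cdot({_{-A}\widehat{\cal N}})$. The paper's argument is terser---it asserts $\big({_{-A}(\widehat\sigma\cdot\widehat{\cal M})}\big)|_{E}={_{-A}(\sigma\cdot{\cal M})}$ in one line---whereas you factor $\Phi$ into the relabeling and reorientation pieces and spell out the restriction-compatibility identities and the rank-preservation remark; but the underlying idea and the explicit inverse are the same.
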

%%%%%
\begin{proof}
Pick an arbitrary extension ${\widehat {\cal M}}$ of ${\cal M}$.
Since the oriented matroid ${\widehat {\cal N}}:={_{-A}({\widehat \sigma} \cdot {\widehat {\cal M}})}$ is such that
\[ {\widehat {\cal N}}|_{E} = {_{-A}(\sigma \cdot {\cal M})} = {\cal N},\]
it is an extension of ${\cal N}$. Conversely, for any extension ${\widehat {\cal N}}$ of ${\cal N}$, oriented matroid ${\widehat {\cal M}} := {\widehat \sigma}^{-1} \cdot ( _{-A}{\widehat {\cal N}})$ is an extension of ${\cal M}$.
\end{proof}
\\
\\
We take the following approach to enumerate all uniform extensions of the CFS-equivalence class representatives 
obtained in Step 2.
First, we partition $\Pi$ with respect to reorientation equivalence. 
For each reorientation class $\theta$, we store a representative ${\cal M}_{\theta}$ along with information to recover the 
CFS-equivalence class representatives ${\cal M}_{\pi}$ contained in $\theta$.
For every representative ${\cal M}_{\theta}$, we then compute all uniform extensions using
Finschi and Fukuda's algorithm~\cite{FF02,FF03}.
Finally, the uniform extensions of CFS-equivalence class representatives ${\cal M}_{\pi}$ contained in $\theta$ are obtained using Proposition \ref{prop:perm_reori}.

\subsubsection*{{\bf Step 4.}}
Once having obtained all uniform extensions, it is easy to find the induced POMCP-orientations using~\eqref{OMCP}.
We group the extensions of CFS-equivalence class representatives ${\cal M}_{\pi}$ such that extensions inducing the same orientation build one group.

\subsubsection*{{\bf Step 5.}} 
We do not know which POMCP-orientations actually correspond to PLCP-orientations so far.
A POMCP-orientation ${\cal O}$ is a PLCP-orientation if and only if there is at least one realizable oriented matroid in the group 
of extensions that induce ${\cal O}$. 
Therefore, deciding whether ${\cal O}$ arises from a PLCP is done by checking realizability of oriented matroids. Recall that PLCP-orientations are closed under isomorphism and facet switches~\cite{SW78}. We partition the set of POMCP-orientations accordingly, and remember representative USOs. It then suffices to check for each representative USO whether it is a PLCP-orientation. This is done by looking for a realizable oriented matroid in the group of inducing extensions. We check realizability by using the method in \cite{FMM12}.

\section{Experimental results}
\label{sec:exp}
In the first step, the P-matroids in OM($3,6$) and OM($4,8$) were extracted from the database provided by Finschi and Fukuda~\cite{FF}. Then, we partitioned the set of the P-matroids with respect to CFS-equivalence and computed all extensions. We observed that there exists at least one class of CFS-equivalent P-matroids in every reorientation class of uniform oriented matroids in OM($3,6$) and OM($4,8$).
The results are summarized in Table~\ref{number_pmatroids}.\\

\begin{table} [h]
 \centering
 \begin{tabular}{lcc}
   & OM$(3,6)$ & OM$(4,8)$ \\ [0.5ex] \hline \\ [-2ex]
  P-matroids up to C-equivalence & 19 & 156,691\\
  P-matroids up to CFS-equivalence & 13 & \phantom{1}19,076 \\
  P-matroid extensions & 1,920 &1,334,887,042 
 \end{tabular}
 \caption{The number of uniform P-matroids \label{number_pmatroids}}
\end{table}

%%%%

Next, we computed the induced POMCP-orientations and built the classes defined by isomorphism and facet switches. See row 1 in Table~\ref{number_orientations}.
For each class of USOs, we randomly picked inducing oriented matroids and checked realizability.
For POMCP-orientations of the $3$-cube (resp.~$4$-cube), we have to investigate realizability of oriented matroids
in OM($3,7$) (resp.~OM($4,9$)). Since every oriented matroid in OM($3,7$) is realizable~\cite{C71,H71}, all $8$ POMCP-orientations of the $3$-cube are PLCP-orientations. Up to isomorphism only, there are $17$ POMCP-orientations of the the $3$-cube. Among them, $16$ are acyclic and $1$ is cyclic. See row 3 and 4 in Table~\ref{number_orientations}.
This result coincides with the result obtained by Stickney and Watson~\cite{SW78}. For the $589$ classes of POMCP-orientations of the $4$-cube, realizability checks are necessary. One may expect that some of these orientations are non-realizable, but to our surprise all of them are realizable. For every orientation, the first pick of an inducing oriented matroid turned out to be realizable, coincidentally.
Hence, all $589$ POMCP-orientations of the $4$-cube are PLCP-orientations. There are $6,910$ orientations up to isomorphism. Among them, $5,951$ are acyclic and $959$ cyclic. 
\begin{table} [h!]
 \centering
 \begin{tabular}{lcc}
   & $3$-cube & $4$-cube \\ [0.5ex] \hline \\ [-2ex]
	POMCP-orientations up to facet switches & \phantom{1}8 & \phantom{6,}589 \\
	PLCP-orientations up to facet switches & \phantom{1}8 & \phantom{6,}589 \\
  POMCP-orientations  & 17 & 6,910 \\
	acyclic POMCP-orientations & 16 & 5,951 
  
 \end{tabular}
 \caption{The number of POMCP-orientations up to isomorphism} \label{number_orientations}
\end{table}

Develin~\cite{D06} presented a scheme to construct $2^{\Omega(2^n/\sqrt{n})}$ USOs of the $n$-cube that satisfy the Holt-Klee property. All 4 (resp.~8) USOs of the $3$-cube (resp.~$4$-cube) in the family are PLCP-orientations.
There are at most $2^{O(n^3)}$ PLCP-orientations of the $n$-cube~\cite{FGKS13}. Thus, some higher dimensional USOs in the family are not PLCP-orientations. It would be interesting to investigate whether each one is a POMCP-orientation.

\subsection{Other insights}
As remarked in the previous subsection, every uniform oriented matroid in OM($4,8)$ has a reorientation that is a P-matroid.
This leads to the following open question.
\begin{itemize}
 \item For any $n \in \mathbb{N}$, does every uniform oriented matroid in OM($n,2n$) has a reorientation that is a P-matroid?
\end{itemize}
In case the answer is ``yes,'' the combinatorial structure of P-matrices can be arbitrary complicated.
The following related questions arise.
\begin{itemize}
\item Is the number of rank $n$ P-matroids of the same order of magnitude
as the number of uniform oriented matroids of rank $n$ on $2n$ elements?
\item In \cite[Section 7.4]{BLSWZ99}, they construct $2^{c_r(n-r)^{r-1}}$, where $c_r=(\frac{1}{2(r-1)})^{r-1}$, uniform oriented matroids of rank $r$ on $n$ elements. 
Does each oriented matroid in this family with $n=2m$ and $r=m$ for $m \in \mathbb{N}$ has a reorientation that is a P-matroid?
\end{itemize}

But then again, many PLCP-orientations can be generated from some specific class of coefficient matrices.
We observed that $6,077$ of the $6,910$ PLCP-orientations of the $4$-cube arise from POMCPs 
where at least one of the underlying P-matroids is a reorientation of the alternating matroid ${\cal A}_{4,8}$. 
The {\it alternating matroid} ${\cal A}_{r,n}$ is the rank $r$ oriented matroid on the ground set $[n]$ whose chirotope $\chi$ satisfies
\[ \chi (i_1,\dots, i_r) = + \text{ for all $1 \leq i_1 < \dots < i_r \leq n$.}\]
Every alternating matroid is realized by a cyclic polytope.

Let a {\it cyclic-P-matroid} be a P-matroid of rank $n$ that is reorientation equivalent to the alternating matroid ${\cal A}_{n,2n}$. Since alternating matroids have a good characterization and their extensions are well-studied~ \cite{Z91}, investigation of the POMCP-orientations arising from cyclic-P-matroids becomes interesting. In \cite{FKM,K12}, it is proven that the class of cyclic-P-matroids is closed under CFS-equivalence and some minor operations. A characterization of the reorientations of alternating matroids ${\cal A}_{n,2n}$ that yield P-matroids is presented as well.

\subsection{Conclusions}
In this paper, we presented the complete enumeration of PLCP-orientations of the $4$-cube through solving the realizability problem of 
oriented matroids.
The related data, including the PLCP-orientations of the 4-cube and the corresponding realizing oriented matroids, is uploaded to 

\vspace{+2mm}

\ \ \ {\tt https://sites.google.com/site/hmiyata1984/lcp}.

\vspace{+2mm}

As mentioned, every POMCP-orientation of the $4$-cube is a PLCP-orientation. It is an open question whether there is a gap between the two classes in higher dimensions.

The computational experiments suggest that the combinatorial structure of P-matrices can be complicated. 
The experiments also point us to the interesting class of cyclic-P-matrix LCPs~\cite{FKM,K12}.

Another possibility to make use of the database might be to improve on various bounds on the runtime complexity of algorithms for the PLCP.
Let $t(n)$ be the smallest (among all strategies) maximum (among all USOs and choices of initial vertices) 
number of vertex evaluations necessary to find the sink of a USO of the $n$-cube. 
Szab\'o and Welzl~\cite{SW01} proposed the \emph{Improved Fibonacci Seesaw} algorithm. 
The runtime analysis yields the recurrence relation $t(n) \leq 2 + \sum_{i=0}^{n-5}{t(i)} + 5t(n-4)$ for $n \geq 5$. 
They provide the initial values $t(0)=1,t(1)=2,t(2)=3,t(3)=5$, and $t(4) \leq 7$ to obtain a bound of $O(1.61^n)$ vertex evaluations. 
The database may be used to find good initial values in similar contexts. In this particular case, the question is whether 
the sink of PLCP-orientations of the $4$-cube can be found by evaluating at most $6$ vertices.

Finally, we remark that enumeration of the LP-orientations of the $4$-cube might also be possible. Recall that for any LP $\min c^Tx$ subject to 
$Ax \leq b$ and $x \geq 0$, the \emph{reduced cost vectors}, which determine the LP-orientation, do not depend on the right-hand side $b$. 
There is hope that the LP-orientations of the $4$-cube can be obtained through considering oriented matroids of rank $4$ on $9$ elements. 
In fact, Foniok, Fukuda, and Klaus~\cite{FFK_pre, K12} established a relation between the {\it hidden K-matroid OMCP} of order $n$ and 
the oriented matroid programming~\cite{FKT82} over $n$-cubes. In the realizable case, the LP-orientations of the $n$-cube coincide with the 
USOs arising from the hidden K-matrix LCPs of order~$n$~\cite{FFK_pre, K12, R07}. Hence, one approach is to first identify the hidden K-matroids
among the P-matroids in OM$(4,8)$, then to create all extensions, and finally to check realizability. Since hidden K-matroids of rank $n$,
unlike P-matroids, admit realizations $(I_n,-M) \in \mathbb{R}^{n \times 2n}$ where $M$ is not a hidden K-matrix, such an approach is 
likely to result in a proper superset of the LP-orientations. Another possibility would be to check for every acyclic PLCP-orientation 
of the $4$-cube whether it is an LP-orientation. Such a problem reduces to the realizability question for rank $5$ oriented matroids on $10$ elements. 
The method in \cite{FMM12} cannot handle instances of that size, which might change with some algorithmic improvements.

%proved that every {\it oriented matroid program orientation} of the $n$-cube, 
%an abstraction of an LP-orientation in the setting of oriented matroids, is also a {\it hidden-K-matroid-OMCP orientation}.
%Every realizable hidden-K oriented matroid of rank $n+1$ on $2n+2$ elements gives an LP-orientation.
%Currently, realizability classification of rank $5$ oriented matroids on $10$ elements by

\subsection*{Acknowledgments}
The authors would like to thank Komei Fukuda for suggesting to study enumeration of PLCP-orientations of the $4$-cube.
They are also grateful to anonymous referees for many helpful comments and suggestions, 
which substantially improved an earlier draft of this paper.
The second author is partially supported by JSPS Grant-in-Aid for Young Scientists (B) 26730002.

\end{document}